\newtheorem{thm}{Theorem}
\newtheorem{lemma}[thm]{Lemma}
\newtheorem{prop}[thm]{Proposition}
\newtheorem*{thm A}{ Theorem A}
\newtheorem*{thm B}{ Theorem B}
\newcommand{\N}{{\mathbb N}}
\newcommand{\Z}{{\mathbb Z}}
\newcommand{\C}{{\mathbb C}}
\newcommand{\R}{{\mathbb R}}
\newcommand{\T}{{\mathbb T}}
\newcommand{\D}{{\mathbb D}}
\newcommand{\be}{\mathcal{B}}
\newcommand{\Al}{\mathcal A}
\newcommand{\El}{\mathcal E}
\newcommand{\FF}{\mathcal F}
\newcommand{\Gl}{\mathcal G}
\newcommand{\HH}{\mathcal H}
\newcommand{\Sl}{\mathcal S}
\newcommand{\Ml}{\mathcal M}
\newcommand{\Zl}{\mathcal Z}
\newcommand{\DD}{\mathcal D}
\begin{document}

\title[Algebraic properties of Toeplitz operators on  generalized Fock spaces on $\C^d$]{Algebraic properties of Toeplitz operators on  generalized Fock spaces on $\C^d$}

\author{H. Bommier-Hato} 

\address{Bommier-Hato: Faculty of Mathematics,
University of Vienna,
Oskar-Morgenstern-Platz 1, 1090 Vienna, Austria } \email{helene.bommier@gmail.com}

\thanks{The author was supported by the FWF project P 30251-N35.}

\subjclass[2010]{Primary 47B35; Secondary 30H20}

\keywords{ Commuting operators, Toeplitz operators, Fock spaces.}

\maketitle

 
\begin{abstract} 
We study two problems involving algebraic properties of Toeplitz operators on generalized Fock spaces on $\C^d$ with  weights of the form $\left|z\right|^{2s} e^{-\left|z\right|^{2m}}$, $m\geq 1,\ s\geq 0$. We determine the commutant of a given Toeplitz operator with a radial symbol which satisfies certain growth conditions. We  also discuss the equation $T_fT_g=0$, when $f$ or $g$ is radial.

\end{abstract}

\section{Introduction}
In this paper, we address two questions related to algebraic properties of Toeplitz operators, namely the \textit{commuting  problem} and the \textit{zero product problem}.

The question of characterizing the symbols $f$ and $g$ such that 
	\begin{equation}\label{toep com}
		T_fT_g=T_gT_f
	\end{equation}
	appears in various contexts. Apart from being of mathematical interest, it is related to questions from Quantum Physics, such as the construction of spectral triples \cite{englis2}. On the Hardy space $H^2(\mathbb{T})$, a complete solution was given 
	by Brown and Halmos in \cite{brow-halm}, where a characterization of the 
	bounded functions $f$ and $g$ satisfying (\ref{toep com}) is provided. On the unweighted Bergman space $A^2(\D)$, the problem is still open for general symbols. However,  Ahern and \v Cu\v ckovi\' c \cite{AC} treated the case of 
	harmonic symbols, and later, \v Cu\v ckovi\' c and Rao \cite{CuR} showed that if $f$ is a bounded nonconstant radial symbol, and $g$ is bounded such that (\ref{toep com}) holds, then $g$ is also radial. Their result has been generalized for the unit ball of $\C^d$ in \cite{tLe}.
	
	The so-called "zero-product problem" for Toeplitz operators on the Hardy space $H^2(\D)$ can be stated as follows: \\
	If $f_1,\cdots,f_N$ are bounded functions, does $T_{f_1}\cdots T_{f_N}=0$ imply that one of the symbols is zero?
	
	The problem has been solved in the affirmative for $N=2$ in \cite{brow-halm} and recently for general $N$ by Aleman and Vukoti\'c \cite{aleman-vukotic}. For Bergman spaces, some partial results have been obtained  (see for instance \cite{AC}, \cite{tLe1}).

In the context of entire functions, the most classical framework is the Segal-Bargmann space $F^{2}_{\alpha}$ of Quantum Mechanics, which is the space of all entire functions $f:\C^d\rightarrow\C$, $d\geq 1$, such that $\left|f\right|^2$ is integrable with respect to the
Gaussian 
$$d\mu_{\alpha}(z):=\left(\frac{\alpha}{\pi}\right)^d e^{-\alpha\left|z\right|^2}dv(z),$$
where $dv(z)$ stands for the Lebesgue volume on $\C^d$ and $\alpha>0$ \cite{Foll,JPR,Zf}. 
 
 Given real numbers $m\geq 1$, $\alpha>0$, $s\geq 0$, we consider the probability measure on $\C^d$,
\begin{equation}\label{dmu cm alpha}
d\mu_{m,\alpha,s}(\zeta):=c_{m,\alpha,s}\left|\zeta\right|^{2s}e^{-{\alpha}\left|\zeta\right|^{2m}}dv(\zeta),\ \text{where  }
	\ c_{m,\alpha,s}=\frac{m\alpha^{\frac{d+s}{m}}}{\pi^d}\frac{\Gamma(d)}{\Gamma\left(\frac{d+s}{m}\right)}. 
\end{equation}

 We denote by $L^{2}_{m,\alpha,s}$ the space of   measurable functions $f:\C^d \rightarrow\C$ such that
\begin{gather*}
	\left\|f\right\|_{L^{2}_{m,\alpha,s}}^2:=\int_{\C^d}\left|f(\zeta) \right|^2d\mu_{m,\alpha,s}(\zeta)<\infty,
\end{gather*}
and  $F^{2}_{m,\alpha,s}$ is the space of  entire functions which are in $L^{2}_{m,\alpha,s}$, equipped with the same norm.\\

Endowed with the inner product, 
\begin{equation*}
 \left\langle x,y\right\rangle_{m,\alpha,s}:= \int_{\C^d} x(\zeta)\overline y(\zeta) d\mu_{m,\alpha,s}(\zeta),\ x,y\in F^{2}_{m,\alpha,s},
\end{equation*}  
$F^{2}_{m,\alpha,s}$ is a Hilbert space.  
The orthogonal projection $P_{m,\alpha,s}:L^{2}_{m,\alpha,s}\rightarrow  F^{2}_{m,\alpha,s}$ is defined by
\begin{equation*}
	P_{m,\alpha,s}x(z)=\int_{\C^d}x(\zeta)K_{m,\alpha,s}(z,\zeta)d\mu_{m,\alpha,s}(\zeta),\ x\in L^{2}_{m,\alpha,s},\  z\in\C^d,
\end{equation*}
 where  $K_{m,\alpha,s}$ denotes the  reproducing kernel in $F^{2}_{m,\alpha,s}$.\\

 For a suitable function $f:\C^d\rightarrow\C$, the Toeplitz operator $T_f$, of symbol $f$ is defined on $F^{2}_{m,\alpha,s}$ by 
$$T_f=P_{m,\alpha,s}M_{f}|_{F^{2}_{m,\alpha,s}} ,$$
  $M_f$ being the operator of multiplication by $f$. The commutant of a radial Toeplitz operator has been studied for the Segal-Bargmann space $(m=1,s=0)$ in \cite{BauerLee,BauerLe} and for Fock-Sobolev spaces  $(m=1,s\text{ real })$ in \cite{choe}.  On the Fock space, the equation 
		\begin{equation*}\label{N=2}
		T_fT_g=0
	\end{equation*}
	has been considered in \cite{BauerLe}, when  one of the functions is  radial. 

In this paper,  we extend the results of \cite{BauerLee,BauerLe,choe} to the generalized Fock space $\FF:=F^{2}_{m,1,s}$, $m\geq 1,s\geq 0$, the parameter $\alpha$ being irrelevant for these questions.  The inner product $\left\langle .,.\right\rangle_{m,1,s}$ will simply be denoted by $\left\langle .,.\right\rangle_{\FF}$.

The growth of the function $f$ plays an important role in the study of the commutant of $T_f$. We shall consider the class of symbols $\Sl(\C^d)$, which consists of functions $f:\C^d\rightarrow \C$ such that $z\mapsto f(z) e^{-c\left|z\right|^{2m}}$ is bounded for all $c>0$. 

We first describe the commutant of $T_f$, when $f$ is a radial function in $\Sl(\C^d)$.

\begin{thm A}\label{thm A}
Let $f,g$ be in $\Sl(\C^d)$, $f$ being a non constant radial function. Then $T_f T_g= T_g T_f$ on the set of analytic polynomials if and only if $g(e^{i\theta} z)=g(z)$, for a.a. $\theta\in\R$ and $z\in \C^d$.
\end{thm A}

We also give a counterexample when $f\notin\Sl(\C^d)$.\\

Our second result is about the zero product problem.
\begin{thm B}\label{thm B}
Let $f,g$ be in $\Sl(\C^d)$, $f$ being  radial. Suppose  $T_f T_g=0$ or $ T_g T_f=0$ on the set of analytic polynomials. Then either $f=0$ or $g=0$ a.e..
\end{thm B}

Following the ideas of \cite{BauerLe}, we study a more general question. For given radial functions $f_1,f_2$ in $\Sl(\C^d)$, we aim to find the solutions $g$ of the equation 
\begin{equation}\label{T f1g=gf2}
	T_g T_{f_1}=T_{f_2}T_g.
\end{equation}

However, the situation  when $m>1$ is more involved than the case of the Segal-Bargmann ($m=1$).

 When $m=1$, the reproducing kernels are exponential functions, which gives  exact formulas (see  \cite{Zf}).  When $m>1$, there is no simple expression for the reproducing kernel of $F^{2}_{m,1,s}$. Therefore, we have to develop new techniques, based on the    asymptotics of kernel functions.

For the Gaussian measure, it is used that for a multinidex $\nu, $ the moment $S_{1,0}(\nu)$, is a polynomial in $\nu$, and that the weight satisfies the multiplicative relation 
$$  e^{-\left|z\right|^2}=e^{-\left|z_1\right|^2}\times e^{-\left|z'\right|^2},\ \text{ for }z=(z_1, z')\in\C^d.$$
This is no longer true for general $m>1$. On another hand, relation (\ref{T f1g=gf2}) leads to an equation involving the eigenvalues of $T_{f_1}$ and $T_{f_2}$, which is more complicated when $m>1$ (see section \ref{eq Toep} for details). \\

Throughout the paper, $\N$ is the set of all non negative integers, $\Pi $ is the half plane $\Re\zeta>0,$
and 
$\chi_A$ is the indicator function of the set $A$.
 The euclidean norm on $\C^d$ is denoted simply by $\left|.\right|$, and the standard inner product by $\left\langle .,.\right\rangle$.
 For two functions $f, g$, the notation $f=O(g)$ or $f \lesssim g$,  means that there exists a constant $C$ such that $f\leq C g$ .
 If  $f=O(g)$ and $g=O(f)$, we  write $f\asymp g$.\\



The paper is organized as follows. In section \ref{prelim}, we present general properties of the space $F^{2}_{m,\alpha,s}$. We also introduce an adequate class of symbols, for which a finite product of Toeplitz operators is defined on   $\FF$. In section \ref{eq Toep}, we  sudy the equation $T_{f_1}T_g=T_gT_{f_2}$, for radial functions $f_1,f_2$ in $\Sl(\C^d)$. Using the Mellin transform, we write conditions for $T_{f_1}T_g$ and $T_gT_{f_2}$ to coincide on holomorphic polynomials. Applying the results from section \ref{eq Toep} to the  situation when $f_1=f_2$, we deal with the commutant problem in  Section \ref{comm}. The zero product problem is considered in Section   \ref{zeroPb}.

\section{Preliminaries} \label{prelim}


\subsection{Fock-type spaces}

Since the weight  $\left|\zeta\right|^{2s}e^{-\alpha\left|\zeta\right|^{2m}}$ depends only on $\left|\zeta\right|$, 
the monomials $\left(v_{\nu}\right)_{\nu\in\N^d}$, with $v_{\nu}(\zeta)=\zeta^{\nu}$, form an orthogonal basis in $ F^{2}_{m,\alpha,s} $.   Here, for a multiindex $\nu=(\nu_1,\cdots,  \nu_d)\in\N^d$, and $\zeta\in\C^d$, we use  the standard  notation $\nu!=\nu_1!\cdots\nu_d!$, $\left|\nu\right|=\nu_1+\cdots+\nu_d$ and $\zeta^{\nu}=\zeta_1^{\nu_1}\cdots\zeta_d^{\nu_d}$. Integration in spherical  coordinates gives that
\begin{equation}
\label{s alpha}
	 S_{\alpha,s}(\nu):=\left\|\zeta^{\nu}\right\|^{2}_{F^{2}_{m,\alpha,s}}= C^{-1}_{s}  \frac{\nu!\Gamma\left(\frac{d+s+\left|\nu\right|}{m}\right)}{\Gamma\left(d+\left|\nu\right|\right)\alpha^{\frac{\left|\nu\right|}{m}}}, \text{ where   }  C_s:=\frac{\Gamma\left(\frac{d+s}{m}\right)}{\Gamma(d)}.
\end{equation}
Then the set $\be=\left\{e_{\nu}(z)=\left[ S_{\alpha,s}(\nu)\right]^{-1/2}z^{\nu},\ \nu\in\N^d\right\}$ is an orthonormal basis for $ F^{2}_{m,\alpha,s} $.
We use  the  theory from Aronszajn \cite{Ar} to compute the reproducing kernel of $ F^{2}_{m,\alpha,s} $. It is given by
\begin{align*}
\label{kernel Fm}
	K_{m,\alpha,s}(\xi,\zeta)& =\sum_{\nu}\frac{\xi^{\nu}\overline{\zeta}^{\nu}}{S_{\alpha,s}(\nu)	} =\sum^{+\infty}_{k=0}\frac{\left(\alpha^{1/m}\left\langle \xi,\zeta\right\rangle\right)^k}{k!}\frac{\Gamma(d+k)}{\Gamma\left(\frac{d+s+k}{m}\right)} \nonumber\\
		&=C_{s} E^{(d-1)}_{\frac{1}{m},\frac{1+s}{m}}\left(\alpha^{1/m}\left\langle \xi,\zeta\right\rangle\right),\quad  \text{ for }\xi,\zeta\in\C^d,
\end{align*}
where 
\begin{equation*}
E_{\beta,\gamma}(z)=\sum^{+\infty}_{k=0}\frac{z^k}{\Gamma\left(\beta k+\gamma\right)},\  \beta,\gamma>0,
\end{equation*}
is the Mittag-Leffler function. Recall that
the  orthogonal projection $P_{m,\alpha,s}: L^{2}_{m,\alpha,s}\rightarrow F^{2}_{m,\alpha,s}$ 
 is defined by
\begin{equation*}
P_{m,\alpha,s}f(\zeta)=\int_{\C^d}K_{m,\alpha,s}(\zeta,\xi)f(\xi)d\mu_{m,\alpha,s}(\xi),\quad \zeta\in\C^d.
\end{equation*}

 In order to estimate the growth of $K_{m,1,s}(x,y)$,  we shall  use
  asymptotics of  Mittag-Leffler function and its derivatives. The asymptotic expansion of $E_{\beta,\gamma}(z)$ is based on the integral representation 
	\begin{equation}\label{integral ML }
	E_{\beta,\gamma}(z)=\frac{1}{2\pi i}\int_{\mathcal C}\frac{t^{\beta-\gamma}e^t}{t^{\beta}-z}dt,\ \Re\beta>0,\ \Re\gamma>0,\ z\in \C,
	\end{equation}
	where the path of integration $\mathcal C$ is a loop starting at $-\infty$, encircling the disk $\left|t\right|\leq\left|z\right|^{\frac{1}{\beta}}$ counterclockwise, and ending at $-\infty$, with $|\arg t|<\pi$ for $t\in \mathcal C$ (see e.g. Bateman and Erdelyi \cite{Bo}, vol. III, ?8.1, formulas (21)?22);  Wong and Zhao \cite{WoZh}). 
	In particular, we have
\begin{equation*}\label{asymp ML m>1/2}
	E_{\frac{1}{m},\frac{1+s}{m}}(z)=\begin{cases}
	mz^{m-1-s}e^{z^m}+O\left(z^{-1}\right),\ \left|\arg z\right|\leq\frac{\pi}{2m},\\
	O\left(\frac{1}{z}\right),\ \frac{\pi}{2m}<|\arg z|<\pi
	\end{cases}
\end{equation*}
for $m > \frac{1}{2}$ and  $z\in\C\setminus\left\{0\right\}$.  

The expansion can be differentiated termwise any number of times, for example by derivation of the integral expression (\ref{integral ML }) (see \cite{Bo} for details in the case $ \beta=\gamma  $). For a positive integer $l$, we get
\begin{equation*}\label{def pk}
	\frac{d^{l-1}}{dz^{l-1}}(mz^{m-1-s}e^{z^m})\asymp z^{l(m-1)-s}e^{z^m},\ \text{ as }\left|z\right|\rightarrow \infty,\ \left|\arg z\right|\leq\frac{\pi}{2m}.
\end{equation*}

 For positive real $z$,  we then obtain the following asymptotics, 
\begin{equation*}\label{asymp deri ML m > 1/2}
	E^{(d-1)}_{\frac{1}{m},\frac{1+s}{m}}(z)=O\left(z^{d(m-1)-s}e^{z^m}\right),\ \text{ as }z\rightarrow \infty.
	\end{equation*}
Notice that  $\left|E^{(d-1)}_{\frac{1}{m},\frac{1+s}{m}}(z) \right|\leq E^{(d-1)}_{\frac{1}{m},\frac{1+s}{m}}(\left|z\right|)$, because  the power series of $E^{(d-1)}_{\frac{1}{m},\frac{1+s}{m}}$ has positive coefficients. It follows that
\begin{equation}\label{asymp kernel}
\left|K_{m,\alpha,s}(x,y)\right|\lesssim \left|\left\langle x,y\right\rangle\right|^{d(m-1)-s}e^{\alpha \left|\left\langle x,y\right\rangle\right|^m} \lesssim\left(\left|x\right|\left|y\right|\right)^{d(m-1)-s}e^{\alpha\left|x\right|^m\left|y\right|^m},
\end{equation}
as $\left|x\right|\left|y\right|\rightarrow\infty$.	\\

Let us now present results of density, which are useful for our study. 
Denote by $\C\left[z, \overline{z}\right]$ the space of polynomials  in the variables $z=(z_1,\cdots,z_d)$ and $\overline z=(\overline z_1,\cdots,\overline z_d)$, and by 
 $\C\left[z\right]$ the space of holomorphic polynomials. In the case of the standard Gaussian measure, it is known that the set $\C\left[z, \overline{z}\right]$ is dense in $L^2\left(\C^d, \pi^{-d}e^{-\left|z\right|^{2}}dv(z)\right)$. The following Proposition, which states a property of $L^p$ spaces with respect to an exponentially bounded measure, is certainly known to specialists. However, we give it for completeness.

\begin{prop}\label{density poly expo bd}
Let $N$ be a positive integer, and  $\mu$ be a Radon measure on  $\R^N$, having moments of all orders. Suppose that,  for some $a>0$,
$$ \int_{\R^N} e^{a\left|x\right|}d\mu(x)<\infty.$$
Then, for all $p\in[1,+\infty[$, the set of all polynomials in the real variables  $x_1,\cdots,x_N$ is dense in $L^p(\R^N, d\mu)$.
 \end{prop}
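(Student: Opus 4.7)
The plan is to argue by duality. By the Hahn--Banach theorem and the Riesz representation of $(L^p)^*$, it suffices to show that the only $h \in L^q(\R^N, d\mu)$ (with $1/p + 1/q = 1$, $q = \infty$ when $p = 1$) satisfying
\[
 \int_{\R^N} x^\alpha h(x)\, d\mu(x) = 0 \qquad \text{for every } \alpha \in \N^N
\]
is $h = 0$ $\mu$-a.e. The existence of all moments together with $\int e^{a|x|}\, d\mu < \infty$ guarantees that $\mu$ is finite and that polynomials actually lie in $L^p(\R^N, d\mu)$, so this pairing is meaningful.

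Given such an $h$, I would introduce the Fourier--Laplace transform
\[
 F(z) = \int_{\R^N} e^{i\langle z, x\rangle} h(x)\, d\mu(x), \qquad z \in \C^N,
\]
where $\langle z,x\rangle = z_1 x_1 + \cdots + z_N x_N$. Since $|e^{i\langle z,x\rangle}| \le e^{|\Im z|\,|x|}$, H\"older's inequality with dual exponents $p$ and $q$ gives
\[
 \int_{\R^N} |h(x)|\, e^{|\Im z|\,|x|}\, d\mu(x) \le \|h\|_{L^q(\mu)} \left(\int_{\R^N} e^{p|\Im z|\,|x|}\, d\mu(x)\right)^{1/p},
\]
which is finite whenever $p|\Im z| < a$, by the hypothesis on $\mu$. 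Hence $F$ is well defined and, by the usual dominated convergence argument for differentiation under the integral, holomorphic on the tube $\Omega = \{z \in \C^N : p|\Im z| < a\}$. Iterated differentiation yields $(\partial^\alpha F)(0) = i^{|\alpha|} \int_{\R^N} x^\alpha h\, d\mu = 0$ for every $\alpha$, so $F$ vanishes to infinite order at the origin. As $\Omega$ is a connected open neighborhood of $0$, restricting $F$ to complex lines through $0$ and then applying analytic continuation gives $F \equiv 0$ on $\Omega$. In particular $F|_{\R^N} \equiv 0$, which is to say the Fourier transform of the finite complex measure $h\, d\mu$ vanishes identically; Fourier uniqueness then forces $h\, d\mu = 0$, i.e.\ $h = 0$ $\mu$-a.e., contradicting the presumed nontriviality of $h$.

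The only real obstacle is verifying that $\Omega$ is a genuine complex neighborhood of $\R^N$ on which both $F$ and all its partial $z$-derivatives are dominated by an $L^1(\mu)$ function. This is precisely what the exponential-integrability hypothesis $\int e^{a|x|}\, d\mu < \infty$ is tailored to provide, via H\"older's inequality; once the holomorphy of $F$ on $\Omega$ is secured, the rest of the argument is a routine appeal to complex analysis and to the injectivity of the Fourier transform on finite measures.
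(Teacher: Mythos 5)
Your argument is correct and is essentially the same as the paper's: dualize via Hahn--Banach, form the Fourier--Laplace transform $F(z)=\int e^{i\langle z,x\rangle}h\,d\mu$, use H\"older together with the exponential integrability to get holomorphy of $F$ on a tube around $\R^N$, note that all derivatives of $F$ vanish at the origin, and conclude by the identity principle and Fourier uniqueness. The only cosmetic difference is the size of the tube (the paper keeps a margin $e^{\frac{a}{2}\left|x\right|}$ to dominate the differentiated integrands, working on $\left|\Im z\right|<\frac{a}{2p}$), which you acknowledge as the point needing care.
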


\begin{proof}
We briefly extend the argument provided in \cite{Berg} to the multidimensional setting. Denote by $p'$ the conjugate exponent of $p$ , i.e. $\frac{1}{p}+\frac{1}{p'}=1$. Let $f$ be in $L^{p'}(\R^N, d\mu)$, such that 
\begin{equation*}\label{integ f=0}
	\int_{\R^N}f(x)x^k d\mu(x)=0,\ \text{ for all }k\in\N^N.
\end{equation*}
For $x=(x_1,\cdots,x_N)\in\R^N$, and $z=(z_1,\cdots,z_N)\in\C^N$, set $x.z=\sum^{N}_{1}x_jz_j$. Now, consider the function defined by
$$  F(z):=\int_{\R^N}f(x)e^{ix.z} d\mu(x),\ z=(z_1,\cdots,z_N)\in\C^N.$$
Using H\"older inequality and Cauchy-Schwarz  inequality, we observe that
\begin{align*}
\int_{\R^N}\left|f(x)e^{ix.z} \right|e^{\frac{a}{2p}\left|x\right|}d\mu(x)&= \int_{\R^N}\left|f(x) \right|e^{-\sum^{N}_{1}x_j \Im z_j}e^{\frac{a}{2p}\left|x\right|} d\mu(x)\\
&\leq \left\|f\right\|_{L^{p'}(\R^N, d\mu)}
\left(\int_{\R^N} e^{ p\left|x\right|\left|\Im z\right|}  e^{\frac{a}{2}\left|x\right|} d\mu(x)\right)^{1/p}.
\end{align*}
By the Dominated Convergence Theorem,  Morera's Theorem and Osgood's Theorem, $F$ is holomorphic in $\Omega=\left\{z=(z_1,\cdots,z_N)\in\C^N,\ \left|\Im z\right|< \frac{a}{2p}\right\}$. Its partial derivatives are expressed by 
$$\partial^{\kappa}_zF(z)= \int_{\R^N}\left(ix\right)^{\kappa}f(x)e^{ix.z} d\mu(x),\ \text{ for  }\kappa\in\N^N.$$
The  assumption induces that $\partial^{\kappa}_zF(0)=0$,  for every multiindex $\kappa$. By the Identity Principle, we have  $F\equiv 0$. In particular, the Fourier transform
$$  F(\xi)=\int_{\R^N}f(x)e^{ix.\xi} d\mu(x),\ \xi\in\R^N$$
is identically zero in $\R^N.$ Therefore $f=0$ a.e..
\end{proof}
Since $\C\left[z, \overline{z}\right]$ is the set of all polynomials in the variables $\Re z$ and $\Im z$, we obtain the following 

\begin{prop}\label{density poly }
Let $m\geq 1,\ \alpha>0, \ s\geq 0$. The set $\C\left[z, \overline{z}\right]$ is dense in  $L^2\left(\C^d,d\mu_{m,\alpha,s}\right)$.

\end{prop}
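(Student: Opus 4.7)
The plan is to deduce Proposition \ref{density poly } directly from Proposition \ref{density poly expo bd} by identifying $\C^d$ with $\R^{2d}$ via $z=(z_1,\dots,z_d)\mapsto(\Re z_1,\Im z_1,\dots,\Re z_d,\Im z_d)$ and taking $N=2d$, $p=2$. Under this identification, $|\zeta|$ on $\C^d$ agrees with the Euclidean norm on $\R^{2d}$, so the measure $d\mu_{m,\alpha,s}$ becomes a Radon measure on $\R^{2d}$.

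First I would check the two hypotheses of Proposition \ref{density poly expo bd}. For the moments, integration in polar coordinates gives
\begin{equation*}
\int_{\C^d}|\zeta|^k\,d\mu_{m,\alpha,s}(\zeta)=c_{m,\alpha,s}\,\omega_{2d-1}\int_0^{\infty}r^{k+2s+2d-1}e^{-\alpha r^{2m}}\,dr,
\end{equation*}
which is finite for every $k\in\N$ since $m\geq 1$. For the exponential integrability,
\begin{equation*}
\int_{\C^d}e^{a|\zeta|}\,d\mu_{m,\alpha,s}(\zeta)=c_{m,\alpha,s}\,\omega_{2d-1}\int_0^{\infty}r^{2s+2d-1}e^{ar-\alpha r^{2m}}\,dr<\infty
\end{equation*}
for every $a>0$, because $2m\geq 2>1$ guarantees that $\alpha r^{2m}$ dominates $ar$ as $r\to\infty$.

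Next I would verify that $\C[z,\overline z]$ coincides, as a vector space, with the algebra of polynomials in the $2d$ real variables $x_j=\Re z_j$ and $y_j=\Im z_j$. This is immediate from the linear change of variables $z_j=x_j+iy_j$, $\overline z_j=x_j-iy_j$, which is invertible over $\C$. Applying Proposition \ref{density poly expo bd} with $p=2$ then yields density of $\C[z,\overline z]$ in $L^2(\C^d,d\mu_{m,\alpha,s})$.

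There is no real obstacle here; the proposition is essentially a packaging statement, and the only thing to be careful about is the indexing convention when viewing $\C^d$ as $\R^{2d}$ and confirming that $\C[z,\overline z]$ is generated by the real coordinate functions.
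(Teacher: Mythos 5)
Your proposal is correct and follows exactly the route the paper takes: the paper deduces Proposition \ref{density poly } from Proposition \ref{density poly expo bd} by observing that $\C[z,\overline z]$ is the set of polynomials in $\Re z$ and $\Im z$, leaving the hypothesis checks (finite moments and exponential integrability, which hold since $2m\geq 2>1$) implicit. You have simply written out those routine verifications.
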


Moreover,  due to the orthogonality of monomials,  the holomorphic polynomials are dense in $F^{2}_{m,\alpha,s}$. 

\subsection{Toeplitz operators on $\FF$}
Recall that $\FF$ denotes the space $F^{2}_{m,1,s}$. For a measurable symbol $u$, the Toeplitz operator of symbol $u$ is defined on 
$\text{Dom} (T_u)=\left\{h\in\FF,\ uh\in L^2\left(\C^d,d\mu_{m,1,s}\right)\right\}$ by 
$$  T_uh=P_{m,1,s}(uh).$$
Thus, the reproducing property in $\FF$  induces that 
\begin{equation*}\label{toep integral}
	T_u h(z)=\int_{\C^d} u(x) h(x) K_{m,1,s}(z,x)d\mu_{m,1,s}(x),\ \text{ for } z\in\C^d.
	\end{equation*}
 Moreover, if $f$ is in $\FF$ and $h$ is in $\text{Dom} (T_u)$, we have 
\begin{equation*}\label{toep bilin}
\left\langle 	T_u h,f\right\rangle_{\FF}=\left\langle P_{m,1,s}(uh),f\right\rangle_{\FF}=\left\langle uh,f\right\rangle_{\FF}.
\end{equation*}

We now define the space of measurable functions which have at most polynomial growth at infinity,
$$\Al(\C^d):=\left\{u:\C^d\rightarrow \C,\ \text{measurable },\ \exists C,c>0,\ \left|u(z)\right|\leq C\left(1+\left|z\right|\right)^c \text{ a.e.}\right\}.  $$

When dealing with radial symbols, we will often identify  radial functions $f$ on $\C^d$  (i.e. $f= f\circ \left|.\right|$),  with functions on $\R_+$ in the obvious manner. In particular, we shall consider the space (see \cite{BauerLee}) $ \Al(\R_+)$ of measurable functions $u:\R_+\rightarrow \C,$ for which there exist $\eta,\rho>0,$ such that 
$$ \left|u(x)x^{-\eta}\right|=O(1)\text{ and }\left|u\left(\frac{1}{x}\right)x^{-\rho}\right|=O(1),\ \text{ for a.a. }x\geq 1. $$

For $c\geq 0$, we also define the Banach space 
$$ \DD_c:=\left\{u:\C^d\rightarrow \C,\ \text{measurable },\ \exists C>0,\ \left|u(z)\right|\leq C e^{c\left|z\right|^{2m}}\text{ a.e. }\right\}, $$
equipped with the norm $\left\|u\right\|_{\DD_c}:=\left\|u e^{-c\left|.\right|^{2m}}\right\|_{L^{\infty}(\C^d)}.$ If the symbol $u$ is in $ \DD_c$ for some $c<1/2$, then all spaces $ \DD_c'$ with $c+c'<1/2$ are contained in the domain of $T_u$. In particular, holomorphic polynomials are in $\text{Dom} (T_u)$ and $T_u$ is densely defined on $\FF$.

For  certain classes of symbols, the correspondence $g\rightarrow T_g$ is one-to-one (see \cite{Foll} for the classical case).

\begin{prop}\label{one to one }
Let $0<c<\frac{1}{2}$. If $g$ is in $\DD_c$, and $T_g=0$ on $\C\left[z\right]$, then $g=0$ a.e..
\end{prop}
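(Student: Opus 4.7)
The plan is to exploit the density result (Proposition~\ref{density poly}) by showing that the vanishing of $T_g$ on $\C[z]$ forces $g$, viewed as an $L^2$ function, to be orthogonal to every polynomial in $z$ and $\overline{z}$.

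The first step is to check that the hypothesis $0<c<1/2$ places $g$ itself in $L^2_{m,1,s}$. Indeed, the pointwise bound $|g(z)|\le C e^{c|z|^{2m}}$ gives
\begin{equation*}
|g(z)|^2\,d\mu_{m,1,s}(z)\;\le\; C^2 c_{m,1,s}\,|z|^{2s}e^{(2c-1)|z|^{2m}}\,dv(z),
\end{equation*}
which is integrable because $2c-1<0$. The same computation shows $gp\in L^2_{m,1,s}$ for every holomorphic polynomial $p$, so $T_g p$ is genuinely defined and the bilinear identity $\langle T_g p,q\rangle_{\FF}=\langle gp,q\rangle_{\FF}$ makes sense for any $q\in\C[z]\subset\FF$.

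Next I unpack the assumption $T_gp=0$ for $p\in\C[z]$. Testing against $q\in\C[z]$ yields
\begin{equation*}
\int_{\C^d} g(z)\,p(z)\,\overline{q(z)}\,d\mu_{m,1,s}(z)=0 \qquad\text{for all }p,q\in\C[z].
\end{equation*}
By linearity this extends to $\int g(z)R(z,\overline z)\,d\mu_{m,1,s}(z)=0$ for every $R\in\C[z,\overline z]$, since any element of $\C[z,\overline z]$ is a finite linear combination of products $z^{\alpha}\overline{z}^{\beta}$. Replacing $R$ by $\overline R$ (which is again in $\C[z,\overline z]$) and noting that conjugation is a bijection on $\C[z,\overline z]$, this rewrites as
\begin{equation*}
\langle g,R\rangle_{L^2(d\mu_{m,1,s})}=\int_{\C^d}g(z)\,\overline{R(z,\overline z)}\,d\mu_{m,1,s}(z)=0\qquad\text{for every }R\in\C[z,\overline z].
\end{equation*}

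The final step invokes Proposition~\ref{density poly}: the space $\C[z,\overline z]$ is dense in $L^2(\C^d,d\mu_{m,1,s})$. Since $g\in L^2(d\mu_{m,1,s})$ by step one and is orthogonal to this dense subspace, it must vanish in $L^2$, hence $g=0$ almost everywhere.

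I do not expect a serious obstacle; the quantitative content is entirely packaged in the inequality $2c-1<0$, which is precisely what forces $g$ into $L^2_{m,1,s}$ and lets the density result close the argument. Had the bound on $g$ been weaker (say $c\ge 1/2$), $g$ would no longer sit inside the reference Hilbert space and one would have to absorb an extra factor of $e^{-|z|^{2m}/2}$ into a secondary measure $|z|^{2s}e^{-|z|^{2m}/2}dv$ and apply Proposition~\ref{density poly} there instead; this indicates why the sharpness $c<1/2$ appears in the statement.
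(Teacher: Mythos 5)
Your argument is correct and is essentially the paper's own proof: both reduce $T_g=0$ on $\C[z]$ to the vanishing of $\int_{\C^d} g(x)x^{\kappa}\overline{x}^{\nu}\,d\mu_{m,1,s}(x)$ for all multiindices and then invoke the density of $\C[z,\overline z]$ in $L^2(\C^d,d\mu_{m,1,s})$ from Proposition \ref{density poly }. Your explicit verification that $c<\tfrac12$ places $g$ in $L^2_{m,1,s}$ is a detail the paper leaves implicit, but it changes nothing in the route.
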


\begin{proof}
The hypothesis entails that
 $$  \left\langle T_g x^{\kappa} ,x^{\nu} \right\rangle_{\FF}=\int_{\C^d}g(x) x^{\kappa} \overline x^{\nu}d\mu_{m,1,s}(x)=0,\ \text{ for all multiindices  } \kappa,\nu. $$
 Since $\C\left[z,\overline z\right]$, which is the linear span of $\left\{z^{\kappa}\overline z^{\nu}\right\}_{\kappa,\nu\in \N^d}$, is dense  in  $L^2\left(\C^d,d\mu_{m,1,s}\right)$ (Proposition \ref{density poly }), we see that $g=0$ a.e..
\end{proof}

\subsection{Products of Toeplitz operators}
In our framework,  products of Toeplitz operators are possibly unbounded. Thus, we need to ensure that these products are densely defined.   We construct an appropriate algebra of Toeplitz operators, in the spirit of \cite{Bauer}. The space of symbols 
$$ \Sl(\C^d):=\cap_{c>0}\DD_c $$
 is a $*$-algebra under pointwise multiplication and with respect to complex conjugation.

 For $j\in\N$, we set  $c_j=1/2-1/(2j+2)$, which  satisfies
\begin{equation}\label{c j+1}
	c_{j+1}=\frac{1}{4(1-c_j)}.
	\end{equation}
  and define
$$  \HH_j:=\DD_{c_j}\cap \FF.$$
Notice that the space of holomorphic polynomials is contained in each $  \HH_j$.
The norm on $  \HH_j$ is given by
$$  \left\|f\right\|_{\HH_j}=\left\|\exp(-c_j\left|.\right|)f\right\|_{L^{\infty}(\C^d)}.$$

Hence, we  obtain a scale of Banach spaces
$$  \C\backsimeq \HH_0\subset \HH_1 \subset \cdots \subset\HH_j\subset\HH_{j+1}\subset \cdots \subset\HH:=\cup_{j\in\N} \HH_j\subset   \FF,$$
the inclusion $\HH\subset   \FF$ being dense for the topology of $\FF$.

\begin{prop}\label{projection Dcj Dcj+2 }
Let $j\in\N$. 
\begin{enumerate}
	\item [(a)] The projection $P_{m,1,s}$ acts boundedly from $\left(\DD_{c_j},\left\|.\right\|_{\DD_{c_j}}\right)$ to $\left(\DD_{c_{j+2}},\left\|.\right\|_{\DD_{c_{j+2}}}\right)$. For $f\in\DD_{c_j}$, we have
$$  \left\|P_{m,1,s}f\right\|_{\DD_{c_{j+2}}}
\lesssim\left(1-c_j\right)^{\epsilon}\left\|f\right\|_{\DD_{c_j}}$$
for some constant $\epsilon$.

	\item [(b)] If $u\in\Sl(\C^d)$, then $M_u$,  the multiplication by $u$,  acts boundedly from   $\left(\DD_{c_j},\left\|.\right\|_{\DD_{c_j}}\right)$ to $\left(\DD_{c_{j+1}},\left\|.\right\|_{\DD_{c_{j+1	}}}\right)$.
\end{enumerate}

\end{prop}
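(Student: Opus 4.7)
My plan addresses (b) first, as it follows almost at once from the definitions. Since $u\in \Sl(\C^d)=\cap_{c>0}\DD_c$, in particular $u\in \DD_{c_{j+1}-c_j}$ (noting $c_{j+1}>c_j$), so for any $f\in\DD_{c_j}$,
$$|u(z)f(z)|\le \|u\|_{\DD_{c_{j+1}-c_j}}\|f\|_{\DD_{c_j}}\,e^{(c_{j+1}-c_j)|z|^{2m}}e^{c_j|z|^{2m}}=\|u\|_{\DD_{c_{j+1}-c_j}}\|f\|_{\DD_{c_j}}\,e^{c_{j+1}|z|^{2m}},$$
which gives $\|M_u f\|_{\DD_{c_{j+1}}}\le \|u\|_{\DD_{c_{j+1}-c_j}}\|f\|_{\DD_{c_j}}$, as wanted.

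For (a), I would start from the integral formula for $P_{m,1,s}$ and bound pointwise using $|f(w)|\le \|f\|_{\DD_{c_j}}e^{c_j|w|^{2m}}$ together with the kernel asymptotics (\ref{asymp kernel}). After integrating out the angular variables and substituting $t=|w|^m$, the estimate reduces to controlling
$$\int_0^\infty t^{\alpha-1}\exp\bigl(|z|^m t-(1-c_j)t^2\bigr)\,dt,\qquad \alpha:=d+(d+s)/m.$$
The decisive step is to complete the square in the exponent and invoke (\ref{c j+1}):
$$|z|^m t-(1-c_j)t^2=-(1-c_j)\Bigl(t-\frac{|z|^m}{2(1-c_j)}\Bigr)^2+c_{j+1}|z|^{2m}.$$
A standard Gaussian/Laplace evaluation then produces an estimate of the form $|P_{m,1,s}f(z)|\lesssim (1-c_j)^{\epsilon}\|f\|_{\DD_{c_j}}\,|z|^{N} e^{c_{j+1}|z|^{2m}}$ for suitable constants $\epsilon, N$.

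The improvement from $c_{j+1}$ to $c_{j+2}$ is precisely what allows the polynomial factor $|z|^N$ to be absorbed into $e^{(c_{j+2}-c_{j+1})|z|^{2m}}$, yielding the advertised estimate in the $\DD_{c_{j+2}}$ norm. The principal obstacle is the careful bookkeeping of the constants: the $1-c_j$ contributions coming from the peak location $t_0=|z|^m/(2(1-c_j))$ and from the curvature $|\phi''(t_0)|=2(1-c_j)$ of the Gaussian must be combined into a single power $(1-c_j)^{\epsilon}$. A minor technical point is that (\ref{asymp kernel}) is only asymptotic as $|z||w|\to\infty$; on the complementary region one uses continuity, hence boundedness, of $K_{m,1,s}$ on compact sets.
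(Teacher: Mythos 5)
Your proposal is correct and follows essentially the same route as the paper: pointwise bound on $f$, the kernel asymptotics (\ref{asymp kernel}), the identity $c_{j+1}=1/(4(1-c_j))$ to produce the factor $e^{c_{j+1}|z|^{2m}}$, and absorption of the residual polynomial factor into the gap between $c_{j+1}$ and $c_{j+2}$. The only cosmetic difference is that the paper delegates the Laplace-type evaluation to the cited Lemma~\ref{l8 BEY} (with $A=1$, $B=(1-c_j)^{-1/2}|z|^m$), whereas you carry out the completion of the square explicitly.
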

In order to prove Proposition \ref{projection Dcj Dcj+2 }, we take the following computational Lemma from \cite{BEY}.

\begin{lemma}\label{l8 BEY}
For $\eta>-2d$, $B>0,$ and $A>0$,
$$ \int_{\C^d}\left|x\right|^{\eta} e^{B\left|x\right|^m}e^{-A\left|x\right|^{2m}}dv(x)\lesssim B^{\frac{\eta+2d}{m}-1}e^{{B^2}/{4A}},$$
as $B\rightarrow\infty.$
\end{lemma}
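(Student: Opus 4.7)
The plan is to reduce the $2d$-dimensional integral to a one-variable Laplace-type integral, and then apply a completion-of-square together with Gaussian concentration.

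Since the integrand is radial, passing to polar coordinates on $\C^d \cong \R^{2d}$ collapses the integral to a constant multiple of $\int_0^\infty r^{\eta+2d-1} e^{Br^m-Ar^{2m}}\,dr$. The substitution $t=r^m$ then converts this into
$$\frac{c}{m}\int_0^\infty t^{\alpha-1} e^{Bt-At^2}\,dt, \qquad \alpha := \frac{\eta+2d}{m}>0,$$
where positivity of $\alpha$ uses the hypothesis $\eta > -2d$. I would next complete the square as $Bt-At^2 = -A(t-t_0)^2 + B^2/(4A)$ with $t_0 = B/(2A)$, which extracts exactly the Gaussian factor $e^{B^2/(4A)}$ that appears on the right-hand side of the claim. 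What remains is the purely one-dimensional estimate
$$J(B) := \int_0^\infty t^{\alpha-1}\, e^{-A(t-t_0)^2}\,dt \;\lesssim\; t_0^{\,\alpha-1}, \qquad t_0\to\infty,$$
which, together with $t_0 \asymp B$, yields the claimed $B^{(\eta+2d)/m-1}$.

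For the estimate on $J(B)$, the rescaling $t = t_0(1+u)$ gives
$$J(B) = t_0^{\alpha}\int_{-1}^{\infty}(1+u)^{\alpha-1}\, e^{-At_0^2 u^2}\,du,$$
and I would extract the missing factor $1/t_0$ from the Gaussian, whose effective width is $1/t_0$. To make this rigorous I would split the $u$-integral into three pieces. On $|u|\leq 1/2$, the factor $(1+u)^{\alpha-1}$ is bounded, so this piece contributes at most $\int_{\R}e^{-At_0^2 u^2}\,du = \sqrt{\pi/A}/t_0$. On $-1<u<-1/2$, the Gaussian is bounded globally by $e^{-At_0^2/4}$, while the integrable singularity of $(1+u)^{\alpha-1}$ at $u=-1$ is harmless because $\alpha>0$. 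On $u\geq 1/2$, rescaling $v = t_0 u$ and using the crude bound $(1+u)^{\alpha-1} \lesssim u^{\alpha-1}+1$ again produces an exponentially small remainder. Combining the three pieces gives $J(B)\lesssim t_0^{\alpha-1}$.

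The step I expect to be the main obstacle is the careful treatment of the case $0<\alpha<1$, where $(1+u)^{\alpha-1}$ blows up at $u=-1$; the three-region split is designed precisely so that this singularity is isolated in a region where the Gaussian provides exponential decay, making the corresponding contribution negligible compared to the leading term $t_0^{\alpha-1}$.
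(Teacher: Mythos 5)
Your argument is correct. Note that the paper does not prove this lemma at all: it is imported verbatim from the reference \cite{BEY} as a ``computational Lemma,'' so there is no in-paper proof to compare against. Your reduction to polar coordinates, the substitution $t=r^m$ (where $\eta>-2d$ gives $\alpha=(\eta+2d)/m>0$ and hence integrability at the origin), the completion of the square extracting $e^{B^2/(4A)}$, and the three-region Laplace-type estimate $J(B)\lesssim t_0^{\alpha-1}$ with $t_0=B/(2A)\asymp B$ all check out; in particular your isolation of the integrable singularity of $(1+u)^{\alpha-1}$ at $u=-1$ in a region where the Gaussian is uniformly $e^{-At_0^2/4}$ correctly handles the case $0<\alpha<1$. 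This is the standard saddle-point computation one would expect behind the cited result, and your sketch is a complete and self-contained substitute for it.
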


\begin{proof}[Proof of Proposition \ref{projection Dcj Dcj+2 }]
Put $b=1-c_j$,  $\gamma=d(m-1)-s$, and
take $z\in\C^d$. From the integral formula
$$  P_{m,1,s}f(z)=\int_{\C^d}f(x)K_{m,1,s}(z,x) d\mu_{m,1,s}(x),$$
 we get
\begin{align*}
  \left|P_{m,1,s}f(z)\right|&\lesssim \left\|f\right\|_{\DD_{c_j}}\int_{\C^d}\left|K_{m,1,s}(z,x)\right|\left|x\right|^{2s}e^{-b\left|x\right|^{2m}}dv(x).
	\end{align*}
	Now let $\left|z\right|$ tend to $\infty$. 
	
	First assume $\gamma\geq 0$.
Using a change of variable and  estimate (\ref{asymp kernel}),
 we obtain
$$ \left|P_{m,1,s}f(z)\right| \lesssim \left\|f\right\|_{\DD_{c_j}} b^{-\frac{d+s}{m}}\int_{\C^d}\left(b^{-\frac{1}{2m}}\left|z\right|\right)^{\gamma}\left|y\right|^{\gamma+2s}e^{b^{-\frac{1}{2}}\left|y\right|^{m}\left|z\right|^{m}}e^{-\left|y\right|^{2m}}dv(y).$$
We next use  Lemma \ref{l8 BEY} with $A=1$ and $B=b^{-\frac{1}{2}}\left|z\right|^m$, combined with (\ref{c j+1}). Thus
\begin{align*}
  \left|P_{m,1,s}f(z)\right|&\lesssim \left\|f\right\|_{\DD_{c_j}}b^{-\frac{d+s}{m}}B^{\frac{\gamma}{m}}  B^{\frac{\gamma+2s+2d}{m}-1}e^{\frac{\left|z\right|^{2m}}{4b}}\\
	&\lesssim \left\|f\right\|_{\DD_{c_j}}b^{\epsilon }\left|z\right|^{m(2d-1)}e^{c_{j+1}{\left|z\right|^{2m}}}.
	\end{align*}
	Similarly, when $\gamma\leq 0$,
	\begin{align*}
 \left|P_{m,1,s}f(z)\right|&\lesssim
 \left\|f\right\|_{\DD_{c_j}}b^{-\frac{d+s}{m}}  \int_{\C^d}\left|y\right|^{2s}e^{b^{-\frac{1}{2}}\left|z\right|^{m}\left|y\right|^{m}}e^{-\left|y\right|^{2m}}dv(y)\\
	&\lesssim \left\|f\right\|_{\DD_{c_j}}b^{-\frac{d+s}{m}}B^{\frac{2s+2d}{m}-1}e^{\frac{ \left|z\right|^{2m}}{4b}}\\
	&\lesssim \left\|f\right\|_{\DD_{c_j}}b^{\epsilon}\left|z\right|^{2s+2d-m}e^{c_{j+1}{\left|z\right|^{2m}}}.
		\end{align*}
	The definition of $\DD_{c_{j+2}}$ gives $(a)$.\\
	
	$(b)$ is a direct consequence of the definition of the spaces $(\DD_{c_{j}})_j$ and $\Sl(\C^d)$, altogether with the fact that $c_{j+1}> c_j$.\\
\end{proof}
The above Proposition ensures that a Toeplitz operator $T_u$ with symbol $u$ in $\Sl(\C^d)$ maps $\HH$ into $\HH$. In particular, if $N$ is  a positive integer, and  $u_1,\cdots, u_N$ are in $\Sl(\C^d)$, the Toeplitz product $T_{u_1}\cdots T_{u_N}$ is densely defined on $\FF$.

\section{An equation for Toeplitz operators}\label{eq Toep}
As observed in  \cite{BauerLe},  the commuting problem and the zero product problem are special cases of the equation
$$ T_{f_1} T_{g} =T_g  T_{f_2},\ \quad\quad\quad\quad\quad\quad{\El(f_1, f_2)}, $$
for given  radial functions  $f_1, f_2$ in $\Sl(\C^d)$. The method consists in writing conditions for $ T_{f_1} T_{g}$ and $ T_{f_1} T_{g}$ to coincide on $\C\left[z\right]$, the space of all holomorphic polynomials. Precisely, we have 
\begin{equation}\label{equality e nu e kappa}
	\left\langle T_{f_1} T_{g} e_{\nu}, e_{\kappa}\right\rangle_{1,s}=\left\langle T_{g} T_{f_2} e_{\nu}, e_{\kappa}\right\rangle_{1,s},\ \text{ for all multiindices }\nu, \kappa.
\end{equation}

\subsection{The Mellin transform}
Relation (\ref{equality e nu e kappa}) will involve the  Mellin transform of functions related to the symbols and the function 
\begin{equation}\label{def phi} 
	\phi(r):=r^{2s}e^{- r^{2m}},\ r>0,
\end{equation}
which defines the density $d\mu_{m,1,s}$.

First recall some well known facts about the  Mellin transform \cite{PaKa}.

Let $f:\R_+\rightarrow\C$ be a function such that $f(x) x^{\Re \zeta-1}$ is integrable with respect to the Lebesgue measure on the real axis, for  $\zeta$ in the strip $$\text{St}(a,b): a< \Re \zeta<b,$$
 $a$ and $b$ being fixed real numbers. The Mellin transform,  defined by 
\begin{equation*}\label{mellin def} 
	\Ml\left[f\right](\zeta):=\int^{+\infty}_{0} f(x) x^{ \zeta-1}dx,
\end{equation*}
 is holomorphic in $\text{St}(a,b)$.

The Mellin convolution $f*g$ of two functions $f,g:\R_+\rightarrow\C$ is defined by 
\begin{equation*}\label{mellin conv} 
	f *g(x):=\int^{+\infty}_{0} f(y) g\left(\frac{x}{y}\right)\frac{dy}{y},
\end{equation*}
for all $x>0$ such that the integral exists. Under certain assumptions, we have 
$$ \Ml\left[f *g\right] = \Ml\left[f\right]\Ml\left[g\right]. $$

Integration in polar coordinates shows that the moments are related to the  Mellin transform of $\phi$ (see (\ref{def phi}))
\begin{equation*}\label{moment mellin} 
	\int^{+\infty}_{0}r^{2s+2d}e^{- r^{2m}}r^{2 \zeta-1}dr=\Ml\left[r^{2d}\phi(r)\right](2\zeta)=\frac{1}{2m}\Gamma\left(\frac{d+s+ \zeta}{m}\right).
\end{equation*}

We shall use the following properties.

\begin{prop}\label{Mu(k)=0} (\cite{BauerLee}, $\text{Proposition 4.11}$). Let $u\in\Al(\R_+)$, $a\in(0,2]$ and a  fixed integer $k_0\in\N$. If 
$$ \Ml\left[u(t) e^{-t}\right](ak+1)= \int^{+\infty}_{0} u(t) e^{-t} t^{ak}dt=0,$$
for all integers $k\geq k_0,$ then $u= 0$ a.e. on $\R_+$.
\end{prop}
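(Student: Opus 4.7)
The plan is to reduce the statement to a Stieltjes moment uniqueness problem on $\R_+$ and invoke Carleman's determinacy criterion. To begin, I would absorb the shift in the arithmetic progression into the symbol, replacing $u(t)$ by $u_1(t) := t^{ak_0} u(t)$; since $\Al(\R_+)$ is stable under multiplication by non-negative powers, $u_1$ remains in $\Al(\R_+)$, and the hypothesis rewrites as
\[
\int_0^{+\infty} u_1(t)\, e^{-t}\, t^{ak}\, dt = 0, \qquad k = 0, 1, 2, \ldots.
\]

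Next, the substitution $s = t^a$ converts the arithmetic progression of exponents $\{ak\}$ into the integers $\{k\}$. Setting
\[
W(s) := \tfrac{1}{a}\, u_1(s^{1/a})\, e^{-s^{1/a}}\, s^{1/a - 1}, \qquad s > 0,
\]
the hypothesis becomes $\int_0^{+\infty} W(s)\, s^k\, ds = 0$ for every $k \in \N$. The growth bounds built into $\Al(\R_+)$ give local integrability of $|W|$ near $0$ (taking $k_0$ large if needed, which is harmless since the hypothesis is a tail condition) and the decisive super-polynomial decay $|W(s)| \lesssim s^{C} e^{-s^{1/a}}$ as $s \to \infty$. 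Consequently, the moments $m_n := \int_0^{+\infty} s^n\, |W(s)|\, ds$ satisfy $m_n \lesssim \Gamma(an + C')$.

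With the moment bound in hand, I would invoke Carleman's determinacy criterion for the Stieltjes moment problem: if a positive measure $\mu$ on $\R_+$ has moments $m_n$ with $\sum_{n \ge 1} m_n^{-1/(2n)} = +\infty$, then $\mu$ is uniquely determined by its moments. Stirling's formula gives $m_n^{1/(2n)} \asymp (an)^{a/2}$, hence the Carleman series is comparable to $\sum n^{-a/2}$, which diverges precisely when $a \le 2$. Decomposing $W$ into its four nonnegative real/imaginary parts and applying the determinacy to each (all of them share the same moments as the zero measure by hypothesis), I conclude $W \equiv 0$ a.e. Reversing the substitution then gives $u \equiv 0$ a.e.\ on $\R_+$.

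The main obstacle is the endpoint $a = 2$: there the Carleman series degenerates to a scaled harmonic series, so the determinacy holds with no margin, and for $a > 2$ it would genuinely fail (log-normal type examples would furnish counterexamples to uniqueness). An alternative, more complex-analytic route would be to study $F(\zeta) := \Ml[u(t) e^{-t}](\zeta)$ directly, noting that it is holomorphic for $\Re \zeta > \rho$, bounded on every vertical line, and grows like $\Gamma(\zeta + \eta)$ on horizontal rays; dividing by an appropriate $\Gamma$-factor and applying Carlson's theorem to the resulting function (which has indicator $\pi a/2$ on the imaginary axis) would give $F \equiv 0$ for $a < 2$, and again the case $a = 2$ would be borderline (indicator exactly $\pi$) and require exploiting the additional true boundedness of $F$ on vertical lines.
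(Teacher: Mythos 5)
Your argument is correct, but note that the paper itself offers no proof of this statement: it is imported verbatim as Proposition~4.11 of the cited Bauer--Lee reference, whose proof runs along the lines of your ``alternative route'' --- one studies $\Ml[u(t)e^{-t}](\zeta)$ as a holomorphic function in a right half-plane, divides by a Gamma factor to control growth, and uses a Phragm\'en--Lindel\"of/Carlson-type uniqueness theorem, with the restriction $a\le 2$ entering through the exponential type $\pi a/2$ of $1/\Gamma$ on vertical lines. Your main route via the Stieltjes moment problem is a genuinely different and rather more elementary path: the substitution $s=t^{a}$ turns the vanishing of $\Ml[u_1(t)e^{-t}]$ on the arithmetic progression $ak+1$ into the vanishing of \emph{all} integer moments of $W$, the bound $m_n\lesssim\Gamma(an+C')$ gives $m_n^{-1/(2n)}\gtrsim n^{-a/2}$, and Carleman's criterion applies exactly for $a\le 2$ --- the same threshold, now seen as the determinacy/indeterminacy boundary for weights $e^{-s^{1/a}}$ (with the log-normal-type examples showing failure for $a>2$, so your remark on sharpness is apt). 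The preliminary devices (absorbing $t^{ak_0}$ into the symbol, enlarging $k_0$ to secure integrability near $0$) are all legitimate since $\Al(\R_+)$ is stable under multiplication by nonnegative powers and the hypothesis is a tail condition. One phrase should be tightened: the four nonnegative pieces $W_1,\dots,W_4$ do \emph{not} ``share the same moments as the zero measure'' (their zeroth moments are positive unless they vanish); what the hypothesis gives is that $W_1$ and $W_2$ have \emph{identical} moment sequences dominated by $(m_n)$, so Carleman determinacy forces $W_1=W_2$ a.e., and likewise $W_3=W_4$. With that correction the proof is complete, and it buys a self-contained real-variable argument in place of the complex-analytic machinery of the reference.
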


For $u$ a function in $\Al(\R_+)$, we set 
$$f_{m,u}(x)=u(x) e^{-x^{2m}}.  $$

\begin{prop}\label{convo fu fv}
Let $u,v$ be in $\Al(\R_+)$.

The Mellin convolution $f_{m,u} *f_{m,v}$ exists on $(0,+\infty)$, and there exists a function $h_1$ in $\Al(\R_+)$, such  that 
$$ f_{m,u} *f_{m,v}(x)=h_1(x) e^{-x^{m}},\ x>0.  $$
If, in addition, $\text{supp }v\subset [0,1]$, there is a function $h_2$ in $\Al(\R_+)$, such  that 
$$  f_{m,u} *f_{m,v}(x)=h_2(x) e^{-x^{2m}},\ x>0.  $$
\end{prop}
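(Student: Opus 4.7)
The plan is to analyze the Mellin convolution
\[
(f_{m,u}*f_{m,v})(x) = \int_0^{+\infty} u(y)\,v(x/y)\, e^{-y^{2m}-(x/y)^{2m}}\frac{dy}{y}
\]
by extracting a Gaussian-type factor from the exponent and controlling the remaining integral via the polynomial bounds that $\Al(\R_+)$ provides at both endpoints. First I would establish that the integral converges absolutely for every $x>0$ by splitting the $y$-axis into the regions $\{y\leq 1\}$ and $\{y\geq 1\}$ (further subdivided by $\{x/y\leq 1\}$ and $\{x/y\geq 1\}$), replacing $|u|,|v|$ by their polynomial majorants in each region; the super-polynomial decay of $e^{-y^{2m}}$ at $y=\infty$ and of $e^{-(x/y)^{2m}}$ at $y=0$ absorb any polynomial singularities.

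For part (i), the key elementary inequality (AM-GM, or $(a-b)^2\geq 0$) gives
\[
y^{2m}+(x/y)^{2m} \ge x^m + \tfrac{1}{2}\bigl(y^{2m}+(x/y)^{2m}\bigr),
\]
so that
\[
|(f_{m,u}*f_{m,v})(x)|\leq e^{-x^m}\int_0^{+\infty}|u(y)||v(x/y)|\,e^{-\frac{1}{2}(y^{2m}+(x/y)^{2m})}\frac{dy}{y}.
\]
Setting $h_1(x) := e^{x^m}(f_{m,u}*f_{m,v})(x)$, I would substitute $y=\sqrt{x}\,t$ so that $(x/y)=\sqrt{x}/t$ and $dy/y = dt/t$, and then use the polynomial bounds on $u,v$ to decompose $h_1(x)$ as a finite sum of terms $x^{\alpha/2}\int_0^{+\infty}t^{\beta-1}e^{-\tfrac{1}{2}x^m(t^{2m}+t^{-2m})}\,dt$. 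A routine Laplace-type estimate around the unique minimum $t=1$ of $g(t)=t^{2m}+t^{-2m}$ (together with the coarse bounds $g(t)\geq t^{2m}$ for $t\geq 2$ and $g(1/t)=g(t)$ for the small-$t$ side) shows each such integral is polynomially bounded in $x$ at infinity. For the behavior at $x=0$, which is where the definition of $\Al(\R_+)$ has a separate requirement, I would work directly with the original convolution integral, splitting $y\in(0,x]\cup[x,1]\cup[1,\infty)$; in each piece the polynomial singularities of $u,v$ at the origin produce a bound $|F(x)|\lesssim x^{-\rho}$ for a suitable $\rho>0$, proving $h_1\in\Al(\R_+)$.

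For part (ii), the hypothesis $\operatorname{supp} v\subset[0,1]$ forces $v(x/y)=0$ unless $y\geq x$, so that the integration range is $[x,+\infty)$. On this range $y^{2m}\geq x^{2m}$, and writing $e^{-y^{2m}}=e^{-x^{2m}}e^{-(y^{2m}-x^{2m})}$ gives
\[
(f_{m,u}*f_{m,v})(x) = e^{-x^{2m}}\int_x^{+\infty}u(y)\,v(x/y)\,e^{-(y^{2m}-x^{2m})}e^{-(x/y)^{2m}}\frac{dy}{y}.
\]
Substituting $y=x(1+w)$, $w\geq 0$, and using the convexity estimates $(1+w)^{2m}-1\geq 2mw$ on $[0,1]$ and $(1+w)^{2m}-1\geq w^{2m}$ on $[1,+\infty)$, together with the polynomial bounds on $u(x(1+w))$ (growth in both $x$ and $w$) and on $v(1/(1+w))$ (the polynomial singularity of $v$ at $0$ translates into growth in $w$), one bounds the remaining integral polynomially in $x$ as $x\to\infty$. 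Near $x=0$ the factor $e^{x^{2m}}$ is bounded, and the same three-region splitting as in part (i) yields polynomial singularity, so $h_2(x):=e^{x^{2m}}(f_{m,u}*f_{m,v})(x)$ belongs to $\Al(\R_+)$.

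The main obstacle I anticipate is the bookkeeping of endpoint behavior: the decay at infinity follows cleanly once the correct exponential is factored out and a Laplace-type estimate is applied, but verifying that $h_1$ and $h_2$ have only polynomial singularities at $x=0$ requires delicately splitting the $y$-range so that at most one of the two factors $u(y), v(x/y)$ is singular in each piece and the $e^{-(x/y)^{2m}}$ or $e^{-y^{2m}}$ factor tames it. Once the endpoint behavior is pinned down, the membership in $\Al(\R_+)$ is immediate from its definition.
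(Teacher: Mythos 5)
Your argument is correct, but it follows a genuinely different route from the paper. The paper's proof is a two-line reduction: substituting $t=y^{m}$ and $\xi=x^{m}$ in the convolution integral turns $f_{m,u}*f_{m,v}(x)$ into $f_{1,\tilde u}*f_{1,\tilde v}(\xi)$ with $\tilde u(t)=\frac{1}{m}u(t^{1/m})$, $\tilde v(t)=v(t^{1/m})$ still in $\Al(\R_+)$ (and $\operatorname{supp}\tilde v\subset[0,1]$ preserved), after which the $m=1$ case, already proved by Bauer and Le, is quoted and one sets $h_1(x)=\tilde h_1(x^m)$. You instead give a self-contained direct proof: the AM--GM bound $y^{2m}+(x/y)^{2m}\geq x^{m}+\frac{1}{2}\bigl(y^{2m}+(x/y)^{2m}\bigr)$ correctly extracts the factor $e^{-x^{m}}$ in part (i), the substitution $y=\sqrt{x}\,t$ reduces the remainder to integrals $\int_0^\infty t^{\beta-1}e^{-\frac12 x^m(t^{2m}+t^{-2m})}dt$ that are indeed polynomially controlled at both ends, and in part (ii) the observation that the support condition restricts the integration to $y\geq x$, so that $e^{-y^{2m}}=e^{-x^{2m}}e^{-(y^{2m}-x^{2m})}$, is exactly the mechanism that produces the stronger factor $e^{-x^{2m}}$. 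The trade-off is clear: the paper's reduction is shorter and leverages the existing one-dimensional lemma, while your version makes explicit where each exponential factor comes from and would stand on its own without the reference; in effect you are re-deriving the Bauer--Le lemma in the general-$m$ setting rather than transporting it there by a change of variable. The estimates you defer as ``routine'' (Laplace-type bounds near $t=1$, the three-region splitting at $x\to 0$) do go through as described, so there is no gap, only unexecuted but standard bookkeeping.
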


\begin{proof}
This result has been proved in \cite{BauerLe} for $m=1.$ Performing a change of variable in the Mellin convolution, and setting $\xi=x^m$, we see that
\begin{align*}
 f_{m,u} *f_{m,v}(x)
=\int^{+\infty}_{0}\tilde u(t) e^{-t^2}\tilde v\left(\frac{\xi}{t}\right)\exp\left[-\frac{\xi^{2}}{t^{2}}\right]\frac{dt}{t}
=f_{1,\tilde u}*f_{1,\tilde v}(\xi),
\end{align*}
where the functions $$\begin{cases}
\tilde u(t):=\frac{1}{m}u\left(t^{1/m}\right),\\
\tilde v(t):=v\left(t^{1/m}\right)
\end{cases}$$
are in $\Al(\R_+)$. From \cite{BauerLe}, there is a function $\tilde h_1$ in $\Al(\R_+)$ satisfying
$$ f_{1,\tilde u}*f_{1,\tilde v}(\xi)= \tilde h_1(\xi) e^{-\xi}.$$
Putting $h_1(x)= \tilde h_1(x^m)$, we get the desired conclusion.

When  $\text{supp }v\subset [0,1]$, the reasoning is similar.
\end{proof}

Proposition \ref{periodic omega} extends Proposition 2.3 in \cite{BauerLe}  to general  $m\geq 1$, $s \geq 0$. We  just sketch its proof.

\begin{prop}\label{periodic omega}
Let $u:\R_+\rightarrow\C$ be a function such that $u\circ \left|.\right|$ is in $\Sl(\C^d)$, and 
$$ \psi(\zeta)=\frac{\Ml\left[u(r) r^{2s}e^{-r^{2m}}\right]\left(2\zeta+2\right)}{\Gamma\left(\frac{\zeta+1}{m}\right)} ,\ \zeta\in \Pi, $$
extends to a periodic function of period $j\in\N\setminus\left\{0\right\}$. Then $u$ is a constant function.
\end{prop}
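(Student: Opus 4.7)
The plan is to invert the relation $\psi(\zeta)\Gamma\bigl((\zeta+1)/m\bigr) = \Ml\left[u(r)r^{2s}e^{-r^{2m}}\right](2\zeta+2)$ by expanding $\psi$ in a Fourier series supplied by periodicity, recovering $u$ via termwise Mellin inversion, and then exploiting the growth built into $\Sl(\C^d)$ to collapse the resulting sum to a single constant term. After the substitutions $t=r^{2m}$ and $w=(\zeta+1)/m$, the identity becomes
\[
2m\,\tilde\psi(w)\,\Gamma(w) = \Ml\left[V(t)\,e^{-t}\right](w),
\]
with $V(t):=u(t^{1/(2m)})\,t^{s/m}$ and $\tilde\psi(w):=\psi(mw-1)$ periodic of period $\tau:=j/m$. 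Because $u\in\Sl(\C^d)$, $V$ is locally bounded on $(0,\infty)$ and satisfies $|V(t)|\lesssim t^{s/m}e^{ct}$ for every $c>0$.

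Periodicity combined with the holomorphy of $\tilde\psi$ on $\{\Re w>0\}$ (from holomorphy of the Mellin integral there and zero-freeness of $\Gamma$) extends $\tilde\psi$ to an entire function, which therefore admits a Fourier expansion $\tilde\psi(w)=\sum_{n\in\Z} c_n e^{2\pi inw/\tau}$. The classical formula $\tfrac{1}{2\pi i}\int_{(\sigma)} e^{\mu w}\Gamma(w)\,t^{-w}\,dw = e^{-t e^{-\mu}}$, applied termwise to the Mellin inverse of $2m\,\tilde\psi\,\Gamma$, yields
\[
V(t) = 2m\sum_{n\in\Z} c_n\,\exp\bigl(t\,(1-e^{-2\pi in/\tau})\bigr).
\]
The exponential rate $e^{-2\pi i n/\tau}=e^{-2\pi inm/j}$ depends only on $n$ modulo $j':=j/\gcd(j,m)$ and takes exactly $j'$ distinct values on the unit circle, the $j'$-th roots of unity. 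Grouping collapses the series to a finite sum $V(t)=2m\sum_{k=0}^{j'-1} B_k e^{t\lambda_k}$ with distinct $\lambda_k$, where $\lambda_0=0$ and $\Re\lambda_k>0$ for $k\neq 0$. Since $V$ is sub-exponential, the standard linear-independence (dominant exponential) argument forces $B_k=0$ for $k\neq 0$, so $V\equiv C$ for some constant $C\in\C$.

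Unwinding, $u(r)=Cr^{-2s}$. For $s=0$ this is already the desired constant. For $s>0$ the local boundedness of $u$ near the origin, inherited from $u\in\DD_c$ for every $c>0$, forces $C=0$ and therefore $u\equiv 0$, still a constant. The most delicate step to justify rigorously is the termwise Mellin inversion: one must establish quantitative decay of the coefficients $|c_n|$ strong enough to compensate the exponential decay $|\Gamma(\sigma+iy)|\asymp|y|^{\sigma-1/2}e^{-\pi|y|/2}$ of the Gamma function on vertical lines, and thereby to justify the interchange of sum and contour integral. I would obtain this by bounding $\tilde\psi$ inside horizontal strips, using a Riemann--Lebesgue estimate on $\Ml[Ve^{-t}]$ together with Stirling asymptotics of $\Gamma$, followed by a Paley--Wiener-type argument; once convergence is secured, the separation of the finitely many residual exponentials is routine.
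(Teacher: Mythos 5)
Your overall strategy is the same as the paper's---expand the periodic extension of $\psi$ in a Fourier series, recognize each mode, via the Mellin transform, as contributing an exponential to $u$, and then use the sub-Gaussian growth forced by $\Sl(\C^d)$ to kill every non-constant mode---but the execution has a genuine gap at its center. The paper's key quantitative step is the estimate $\left|\psi(\zeta)\right|\leq C_1e^{\frac{\pi}{2m}\left|\zeta\right|}$, obtained from the bound $|\tilde u(t)|\leq Ce^{ct^2}$ together with asymptotics of $\Gamma\left(\frac{\Re\zeta+1}{m}\right)/\left|\Gamma\left(\frac{\zeta+1}{m}\right)\right|$; this shows the periodic extension is an entire function of exponential type, and Boas's Theorem 6.10.1 then forces the Fourier series to be a \emph{finite} sum $\sum_{|k|\leq j/(4m)}a_ke^{2\pi ik\zeta/j}$. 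You never prove such a bound: you work with the full bi-infinite series $\sum_{n\in\Z}c_ne^{2\pi inw/\tau}$ and defer the justification of termwise Mellin inversion to a sketched ``Paley--Wiener-type argument.'' That deferred step is precisely the heart of the proof; had you carried it out, the exponential-type bound would have told you that all but finitely many $c_n$ vanish, and the rest of your argument would simplify to the paper's.

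The gap is not merely one of rigor, because your workaround for the infinite series is false in the paper's generality. You group the exponents $e^{-2\pi inm/j}$ into $j'=j/\gcd(j,m)$ classes, which tacitly assumes $m$ is a positive integer; the paper allows any real $m\geq 1$. For irrational $m$ the numbers $e^{-2\pi inm/j}$, $n\in\Z$, are pairwise distinct and dense on the unit circle, so the series $\sum_n c_n e^{t(1-e^{-2\pi inm/j})}$ does not collapse to a finite sum, the real parts of the exponents accumulate throughout $[0,2]$, and the ``dominant exponential'' linear-independence argument has no single dominant term to isolate. So both the convergence of your representation of $V$ and the step that extracts $B_k=0$ for $k\neq0$ break down. (One point in your favor: your final remark that $u(r)=Cr^{-2s}$ forces $C=0$ when $s>0$, by boundedness of $u$ near the origin, is correct and is glossed over in the paper, which stops at $\tilde u\equiv b_0$.)
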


\begin{proof}
By the change of variable $t=r^m$, we get
\begin{align*}
\Ml\left[u\phi\right]\left(2\zeta+2\right)&=\int^{+\infty}_{0}u(r)r^{2s}e^{-r^{2m}}r^{2\zeta+1}dr\\
&=\frac{1}{m}\int^{+\infty}_{0}u\left(t^{1/m}\right)t^{\frac{1}{m}\left(2s+2\zeta+2\right)-1}e^{-t^2}dt\\
&=\Ml\left[\tilde u\phi_G\right]\left(\frac{2\zeta+2}{m}\right),
\end{align*}
where $\tilde u(t):=\frac{1}{m}u\left(t^{1/m}\right)t^{\frac{2s}{m}}$ and $\phi_G(t):=e^{-t^2}.$
Since $u\circ \left|.\right|$ is in $\Sl(\C^d)$, we can choose $c\in(0,1)$ and $C>0$, such that $\left|\tilde u(t)\right|\leq C e^{ct^2}. $

By a change of variable, $\psi$ satisfies the estimate
$$ \left| \psi(\zeta)\right|\leq\frac{C}{2}\left(1-c\right)^{-\frac{\Re \zeta+1}{m}}\frac{\Gamma\left(\frac{\Re\zeta+1}{m}\right)}{\left|\Gamma\left(\frac{\zeta+1}{m}\right)\right|},\ \Re\zeta>-1,$$
 (see \cite{BauerLe} and \cite{BauerLee} for details).
Since $\left(1-c\right)^{-\frac{\Re \zeta+1}{m}}$ is bounded in any strip $a\leq \Re \zeta \leq a+j$, 
 the properties of the Gamma function and the arguments in \cite{BauerLee,BauerLe} imply that 
$$ \left| \psi(\zeta)\right|\leq C_1 e^{\frac{\pi}{2m}\left|\zeta\right|},\ \zeta\in \C. $$
Thus  $\psi$ is an entire function of exponential type which is periodic on the real axis. By Theorem 6.10.1 in \cite{boas},  $  \psi$ has the form
\begin{equation*}\label{psi fourier}
	\psi(\zeta)=\sum_{\left|k\right|\leq\frac{j}{4m}}a_k e^{2\pi i \frac{k}{j}\zeta}, \ \zeta\in \C.
\end{equation*}
This yields
\begin{equation*}\label{psi fourier2}
\Ml\left[\tilde u\phi_G\right] \left(\frac{2\zeta+2}{m}\right)=\Gamma\left(\frac{\zeta+1}{m}\right)\sum_{\left|k\right|\leq\frac{j}{4m}}a_k e^{2\pi i \frac{k}{j}\zeta}, \ \Re\zeta>-1. 
\end{equation*}
It is shown in \cite{BauerLee} that, if $\lambda\in\C$ and $\Re\lambda<1,$ 
$$ \Ml\left[e^{\lambda t^2}\phi_G\right] (z)=\frac{1}{2\pi}\left(1-\lambda\right)^{-\frac{z}{2}}\Gamma\left(\frac{z}{2}\right). $$
We now set 
$$ 
1-\lambda_k:=e^{-2\pi i \frac{k}{j}m},\ \text{ if }\left|k\right|\leq\frac{j}{4m}.
 $$

For $\left|k\right|\leq\frac{j}{4m}$, there are complex constants $b_k$,  such that when $\Re\zeta>-1,$
\begin{equation*}\label{psi mellin}
\Gamma\left(\frac{\zeta+1}{m}\right)\sum_{\left|k\right|\leq\frac{j}{4m}}a_k e^{2\pi i \frac{k}{j}\zeta}=\Ml\left[b_0 e^{- t^2} +\sum_{0\neq \left|k\right|\leq\frac{j}{4m}}b_k e^{\left(\lambda_k -1\right) t^2} \right] \left(\frac{2\zeta+2}{m}\right).
\end{equation*}
The Mellin transform being injective, we deduce that 
$$ \tilde u(t)=b_0  +\sum_{0\neq \left|k\right|\leq\frac{j}{4m}}b_k e^{\lambda_k  t^2} .$$
Since $\tilde u(t)e^{ct^2}$ is bounded for all $0<c<1$, we conclude that $\tilde u\equiv b_0 $.
\end{proof}

Recall that  the Beta function satisfies
\begin{equation*}\label{beta}
B(z,w):=\int^{1}_{0}x^{z-1}(1-x)^{w-1}dx=\frac{\Gamma(z) \Gamma(w)}{\Gamma(z+w)}, \ \text{for }\Re z, \Re w>0. 
\end{equation*}
 Therefore the quotient of two Gamma functions is a Mellin transform.

\begin{lemma}\label{quotient gamma}
Let $m\geq 1$. For $a,b$ be real numbers such that $a<b$, set $a'=a/m$ and $b'=b/m$. We have 
$$ \Phi(z):=\frac{\Gamma\left(\frac{a+z}{m}\right)}{\Gamma\left(\frac{b+z}{m}\right)}=\Ml\left[v\right](2z),\  z\in\C,\ \Re z>-a,$$
for some function $v$  in $\Al(\R_+)$, with $\text{supp }v\subset   [0,1]$.
\end{lemma}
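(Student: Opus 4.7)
The plan is to realize the quotient of Gamma functions as an honest Beta integral and then change variables so that the integral becomes a Mellin transform evaluated at $2z$. First, with the shorthand $a'=a/m$, $b'=b/m$, set $\alpha=(a+z)/m$ and $\beta=b'-a'=(b-a)/m$; because $b>a$ we have $\beta>0$, and for $\Re z>-a$ we have $\Re\alpha>0$, so the classical Beta integral
\begin{equation*}
\frac{\Gamma(\alpha)\Gamma(\beta)}{\Gamma(\alpha+\beta)}=B(\alpha,\beta)=\int_0^1 x^{\alpha-1}(1-x)^{\beta-1}\,dx
\end{equation*}
applies, giving
\begin{equation*}
\frac{\Gamma\!\left(\frac{a+z}{m}\right)}{\Gamma\!\left(\frac{b+z}{m}\right)}=\frac{1}{\Gamma\!\left(\frac{b-a}{m}\right)}\int_0^1 x^{(a+z)/m-1}(1-x)^{(b-a)/m-1}\,dx.
\end{equation*}

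Second, substitute $x=t^{2m}$ (so $dx=2mt^{2m-1}dt$), which maps $[0,1]$ onto $[0,1]$ and turns the factor $x^{z/m}$ into $t^{2z}$. This gives
\begin{equation*}
\Phi(z)=\int_0^1\frac{2m}{\Gamma\!\left(\frac{b-a}{m}\right)}\,t^{2a}\bigl(1-t^{2m}\bigr)^{(b-a)/m-1}\,t^{2z-1}\,dt,
\end{equation*}
which is exactly $\Ml[v](2z)$ for
\begin{equation*}
v(t):=\frac{2m}{\Gamma\!\left(\frac{b-a}{m}\right)}\,t^{2a}\bigl(1-t^{2m}\bigr)^{(b-a)/m-1}\chi_{[0,1]}(t).
\end{equation*}
By construction, $\mathrm{supp}\,v\subset[0,1]$.

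Third, verify $v\in\Al(\R_+)$. Since $v\equiv 0$ on $(1,\infty)$, the growth condition at infinity $|v(x)x^{-\eta}|=O(1)$ holds trivially for any $\eta>0$. Near the origin, $v(t)\sim C t^{2a}$, so choosing any $\rho>\max(0,-2a)$ yields $|v(1/x)x^{-\rho}|=|v(t)t^{\rho}|=O(t^{2a+\rho})=O(1)$ as $t\to 0^+$, i.e., as $x\to\infty$. Hence $v\in\Al(\R_+)$.

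There is no real obstacle here: the argument is a one-line Beta-integral identity followed by a bookkeeping change of variables, and the verification of the $\Al$ conditions is automatic from the compact support. The only point worth flagging is that the condition $b>a$ is used to guarantee convergence of the Beta integral at $x=1$, and the condition $\Re z>-a$ is exactly what makes it converge at $x=0$.
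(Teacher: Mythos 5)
Your proof is correct and follows essentially the same route as the paper: the quotient of Gamma functions is written as a Beta integral and converted into a Mellin transform by a power substitution, yielding exactly the same function $v(t)=\frac{2m}{\Gamma\left(\frac{b-a}{m}\right)}t^{2a}\left(1-t^{2m}\right)^{\frac{b-a}{m}-1}\chi_{[0,1]}(t)$ (the paper merely splits your substitution $x=t^{2m}$ into two steps, $r=x^m$ followed by $x=r^2$). Your explicit verification that $v\in\Al(\R_+)$ is a welcome addition, as the paper leaves that point implicit.
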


\begin{proof}
Using a change of variable, we see that
\begin{align*}
\Phi(z)&=\frac{\Gamma\left(a'+\frac{z}{m}\right)}{\Gamma\left(b'+\frac{z}{m}\right)}
=\left[\Gamma\left(b'-a'\right)\right]^{-1}B\left(a'+\frac{z}{m},b'-a'\right)\\
&=\left[\Gamma\left(b'-a'\right)\right]^{-1}\int^{1}_{0}r^{a'+\frac{z}{m}-1}(1-r)^{b'-a'-1}dr
=\Ml\left[v_1\right](z),
\end{align*}	
where $v_1(x):=m\left[\Gamma\left(b'-a'\right)\right]^{-1}x^{ma'}(1-x^m)^{b'-a'-1}\chi_{(0,1)}(x).$
With the notation	$v(r):=2v_1(r^2),$ it is straightforward that
$$ \Phi(z) =\int_{0}^{+\infty}v(r) r^{2z-1}dr.$$
																																																	
\end{proof}
Let us also recall a result extracted from the proof of Proposition 3.1 from \cite{BauerLe}. Let $q $ be a positive integer. For any polynomial $p(\zeta)$ of degree $<q$, there is polynomial $\tilde p$ such that
\begin{equation}\label{par frac}
\frac{p(\zeta)}{(\zeta+1)\cdots(\zeta+q)}=\Ml\left[\tilde p \chi_{ [0,1]}\right](2\zeta+2),\ \zeta\in\Pi.
\end{equation}

\subsection{Toeplitz operators with radial symbols}
For $z=(z_1,\cdots,z_d)\in\C^d$, we set
$$ \Sigma z=z_1+\cdots+z_d.$$
If $f$ is  a radial function in $\Sl(\C^d)$, then $T_f$ is diagonal with respect to the orthonormal basis $\left(e_{\nu}\right)_{\nu}$. Since the measure $d\mu_{m,1,s}$ is radial,  the eigenvalue  $\omega(f,\nu)$ associated to $e_{\nu}$ depends only on $\Sigma\nu$
 for $\nu\in\N^d.$
  Indeed, from Lemma 2.2 in  \cite{BauerLee}, we know  that
$$ T_f e_{\nu}=\omega(f,\nu) e_{\nu},$$
and 
\begin{equation*}\label{omega |nu|}
	\omega(f,\nu) =\frac{2m}{\Gamma\left(\frac{d+s+\Sigma\nu}{m}\right)}\int^{+\infty}_{0}r^{2d+2s}f(r)e^{-r^{2m}}r^{2\Sigma\nu-1}dr .
\end{equation*}
The function
\begin{equation*}\label{Omega |nu|}
	\Omega(f,\zeta) :=\frac{ \Ml\left[2m f(r)r^{2d+2s}e^{-r^{2m}}\right]\left(2 \zeta\right) }{\Gamma\left(\frac{d+s+\zeta}{m}\right)} , \ \text{ for }\zeta\in\C,\ \Re\zeta>-s-d,
\end{equation*}
 is holomorphic on its domain, and 
\begin{equation}\label{omega |nu|}
\omega(f,\nu) =\Omega(f,\Sigma\nu)=\frac{1}{S_{1,s}(\nu)}\Gl f(\nu),\ \text{ for }\nu\in\N^d.
\end{equation}

Here 
\begin{equation}\label{def Gl}
\Gl g(z):=\int_{\C^d}g(x)\left|x_1\right|^{2 z_1} \cdots \left|x_d\right|^{2 z_d}d\mu_{m,1,s}(x),\ z\in\Pi^d,
\end{equation}
for a measurable function $g$ such that the integral is defined. The transform $\Gl$ has been introduced in \cite{BauerLe} when $m=1, s=0.$ The properties shown in the case of the Gaussian measure extend to the measure $d\mu_{m,1,s}$, $m\geq 1$, $s\geq 0$.

When $d=1$, $\Gl g$ is expressed in terms of the radialization of $g$ 
\begin{equation*}\label{radialization}
g_{\text{rad}}(r):=\frac{1}{\pi}\int^{2\pi}_{0}g(r e^{i\theta})d\theta,\ r>0.
\end{equation*}
 We have 
\begin{equation}\label{Gl radialization}
\Gl g(z)=\Ml\left[\frac{m}{\Gamma\left(\frac{1+s}{m}\right)}g_{\text{rad}}(r)r^{2s+2}e^{-r^{2m}}\right](2z),\ \text{ for }z\in \Pi.
\end{equation}
If $g$ is in $\DD_c$ for some $c<1$, the function $g(z) e^{-c\left|z\right|^{2m}}$ is bounded  on $\C^d$; the Dominated Convergence  Theorem and Morera's Theorem imply that $\Gl g$ is analytic on $\Pi^d$, and continuous on $\overline\Pi^d$ . 

For technical reasons, it is easier to deal with functions having polynomial growth at infinity.  For $t$  a positive number, we define the transform
$$V_t g(x):=g(tx) e^{(1-t^{2m})\left|x\right|^{2m}},\ x\in\C^d,  $$
for any function $g:\C^d\rightarrow\C$.  

Assume that $g$ is in $\Sl(\C^d)$;
if $t$ is large enough, $V_t g(x)$ tends to $0$ as $\left|x\right|\rightarrow\infty$; in particular,  $V_t g$ belongs to $\Al(\C^d)$ .

A change of variable shows that, whenever the integral  (\ref{def Gl})    is convergent, we have
\begin{equation}\label{GlVt}
 \Gl V_t g(z)=t^{-2(s+d)}t^{-2\Sigma z} \Gl  g(z).  
\end{equation}

\subsection{The equation $T_g T_{f_1}=T_{f_2}T_g$}\label{toep rad}
Assume that $f_1, f_2$ are radial functions in $\Sl(\C^d)$. If $g$ is  also in $\Sl(\C^d)$, the products $T_g T_{f_1}$ and $T_{f_2}T_g$ are densely defined on $\FF$. We now write conditions for $T_g T_{f_1}$ and $T_{f_2}T_g$ to coincide on $\C\left[z\right]$. For multiindices $\kappa,\nu$, 
we have
\begin{align*}
  \left\langle T_{f_2}T_g e_{\kappa}, e_{\nu}\right\rangle_{\FF}&=
 \left\langle P_{m,1,s}( g e_{\kappa}), \overline f_2e_{\nu}\right\rangle_{\FF}= \left\langle  g e_{\kappa}, T_{\overline f_2}e_{\nu}\right\rangle_{\FF}\\
&=\left\langle g e_{\kappa}, \omega(\overline f_2,\nu)e_{\nu}\right\rangle_{\FF}
= \Omega(f_2,\Sigma\nu) \left\langle g e_{\kappa}, e_{\nu}\right\rangle_{\FF}
\end{align*}
and 
$$ \left\langle T_g T_{f_1}e_{\kappa}, e_{\nu}\right\rangle_{\FF}= \Omega( f_1,\Sigma\kappa) \left\langle g e_{\kappa}, e_{\nu}\right\rangle_{\FF} .$$
Thus,  $T_g T_{f_1}=T_{f_2}T_g$ on $\C\left[z\right]$ if and only if
\begin{equation*}\label{eq 1}
0=\left[\Omega( f_1,\Sigma\kappa)-\Omega( f_2,\Sigma\nu)\right]\int_{\C^d}g(x)x^{\kappa}  \overline x^{\nu}d\mu_{m,1,s}(x), \text{  for all }\kappa,\nu\in\N^d.
\end{equation*}
Fix $k,n$  in  $\N^d$, such that $\Sigma k<\Sigma n$, and set 
 $$ g_1(x)=g_2(x):=g(x)x^k \overline x^n ,\ x\in\C^d.$$
 Replacing $\kappa$ by $k+l$ and $\nu$ by $n+l$,  we  obtain
\begin{equation}\label{eq 2}
0=\left[\Omega( f_1, \Sigma k+\Sigma l)-\Omega( f_2,\Sigma n+\Sigma l)\right]\Gl g_1(l), \text{  for all }l\in\N^d.
\end{equation}

Formulas (\ref{s alpha}) and (\ref{omega |nu|}) imply that
\begin{align*}
A&:=C^{-1}_{s} \left[\Omega( f_1, \Sigma k+\Sigma l)-\Omega( f_2,\Sigma n+\Sigma l)\right]\\
&=\frac{\Gamma\left(d+\Sigma(k+l)\right)\Gl \tilde f_1(l)}{(k+l)!\Gamma\left(\frac{d+s+\Sigma(k+l)}{m}\right)}-\frac{\Gamma\left(d+\Sigma(n+l)\right)\Gl \tilde f_2(l)}{(n+l)!\Gamma\left(\frac{d+s+\Sigma(n+l)}{m}\right)},
\end{align*}
where
\begin{align*}
\tilde f_1(z)&= f_1(z)\left|x_1\right|^{2k_1}\cdots\left|x_d\right|^{2k_d},\\
\tilde f_2(z)&= f_2(z)\left|x_1\right|^{2n_1}\cdots\left|x_d\right|^{2n_d},\ z\in\C^d. 
\end{align*}
Set
$$ \Delta(l):=\frac{(k+n+l)!}{(k+l)!}\frac{\Gamma\left({d+\Sigma(k+l)}\right)}{\Gamma\left({d+\Sigma(n+l)}\right)}\Gl \tilde f_1(l)-
\frac{(k+n+l)!}{(n+l)!}\frac{\Gamma\left(\frac{d+s+\Sigma(k+l)}{m}\right)}{\Gamma\left(\frac{d+s+\Sigma(n+l)}{m}\right)}\Gl \tilde f_2(l) .$$
Hence (\ref{eq 2}) is equivalent to 
\begin{equation}\label{G(l)=0}
G(l):= \Delta(l)\Gl g_1(l)=0,\ \text{ for all }l\in\N^d.
\end{equation}
Observe that $\frac{(k+n+l)!}{(n+l)!}$ and $\frac{(k+n+l)!}{(k+l)!} $ are polynomials in $l$.
 In fact, there exist holomorphic polynomials $p_1$ and $p_2$ such that  $\left\{\Delta(l),\ l\in\N^d\right\}$ and $\left\{G(l),\ l\in\N^d\right\}$ are the restrictions to $\N^d$ of the holomorphic functions
\begin{equation*}\label{def0 delta}
\Delta(z):=p_1(z)\frac{\Gamma\left(\frac{d+s+\Sigma(k+z)}{m}\right)}{\Gamma\left(\frac{d+s+\Sigma(n+z)}{m}\right)}\Gl \tilde f_1(z)-p_2(z)\frac{\Gamma\left({d+\Sigma(k+z)}\right)}{\Gamma\left({d+\Sigma(n+z)}\right)}\Gl \tilde f_2(z) ,
\end{equation*}
and 
\begin{equation*}\label{def0 G}
G(z):=\Delta(z)\Gl g_1(z),\ \text{ for } z\in \Pi^d,
\end{equation*}
respectively.
From Lemma \ref{quotient gamma},  there are functions $v_k$, $k=1,2$, in $\Al(\R_+)$ such that, for all $z\in \Pi^d$,
\begin{equation}\label{def G}
	G(z)= \sum^{2}_{j=1}p_j(z) \Ml\left[v_j\chi_{ [0,1]}\right](2\Sigma z) \Gl \tilde f_j(z)\Gl g_j(z).
\end{equation}

The next Proposition  proves that equation (\ref{G(l)=0}) implies that $G$ is identically zero.

\begin{prop}\label{Gl=0 implies G=0}
Let $N$ be a positive integer. 

Suppose that for every $j=1..N$, $p_j$ is a holomorphic polynomial, $v_j$ is  in $\Al(\R_+)$, and $ f_j,g_j$ are in $\Sl(\C^d)$. In addition, we assume that the Mellin transform $\Ml\left[v_j\chi_{ [0,1]}\right]$ is holomorphic on $\Pi$, continuous on $\overline\Pi$. Set
$$ G(z):= \sum^{N}_{j=1}p_j(z) \Ml\left[v_j\chi_{ [0,1]}\right](2\Sigma z) \Gl  f_j(z)\Gl g_j(z),\ z\in\Pi^d. $$
If $G(l)=0$ for all $l\in\N^d$, then $G$ identically vanishes on  $\Pi^d. $
\end{prop}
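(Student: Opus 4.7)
I would prove the proposition by combining holomorphy of $G$ with polar-coordinate growth estimates on its factors, and then applying an iterated Carlson-type uniqueness theorem after normalising by a suitable Gamma factor.

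First, I verify that $G$ is holomorphic on $\Pi^d$ and continuous on $\overline\Pi^d$: each $\Gl f_j, \Gl g_j$ is so by Morera's theorem and dominated convergence, using that $f_j,g_j\in\Sl(\C^d)$ decay faster than any $e^{c|x|^{2m}}$; the Mellin factors are well-behaved by hypothesis. Then a spherical-coordinates computation (analogous to the proof of Prop.~\ref{projection Dcj Dcj+2 } and using Lemma~\ref{l8 BEY}) yields, for every $\varepsilon\in(0,1)$,
$$|\Gl f_j(z)|\ \lesssim\ (1-\varepsilon)^{-\frac{\Sigma\Re z+d+s}{m}}\,\Gamma\!\left(\tfrac{\Sigma\Re z+d+s}{m}\right),\qquad z\in\overline\Pi^d,$$
and likewise for $\Gl g_j$; both depend only on $\Re z$, so are uniformly bounded on any vertical slice $\Re z=\mathrm{const}$. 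Writing $\Ml[v_j\chi_{[0,1]}](2\Sigma z)=\int_0^\infty v_j(e^{-s})e^{-2\Sigma z\,s}\,ds$ as a Laplace transform shows this factor is bounded on $\overline\Pi^d$ and decays to $0$ on every vertical slice by Riemann--Lebesgue, while $p_j$ is polynomial.

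Next I normalise: set
$$\tilde G(z)\ :=\ \frac{G(z)}{\Gamma\!\left(\tfrac{2\Sigma z+d+s}{m}+A\right)},$$
with $A\in\N$ large enough that the denominator is zero-free on $\overline\Pi^d$. Then $\tilde G$ is holomorphic on $\Pi^d$, continuous on $\overline\Pi^d$, and still vanishes on $\N^d$. Stirling's estimate $|\Gamma(a+ib)|\asymp\sqrt{2\pi}|b|^{a-1/2}e^{-\pi|b|/2}$ as $|b|\to\infty$, combined with the Gamma duplication formula to absorb the Gamma-squared growth of $\Gl f_j\,\Gl g_j$ into the denominator, yields $|\tilde G(z)|\lesssim(1+|z|)^{N}e^{(\pi/m)|\Im\Sigma z|}$ on $\overline\Pi^d$, so $\tilde G$ has exponential type at most $\pi/m$ in each variable separately.

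Finally, I would apply Carlson's theorem iteratively: for $m>1$, the type $\pi/m<\pi$ is within Carlson's range, so fixing $l_2,\ldots,l_d\in\N$ forces $z_1\mapsto\tilde G(z_1,l_2,\ldots,l_d)$ to vanish on $\Pi$; fixing $z_1\in\Pi$ and $l_3,\ldots,l_d\in\N$ gives the same conclusion for $z_2\mapsto\tilde G(z_1,z_2,l_3,\ldots,l_d)$, and so on, yielding $\tilde G\equiv 0$ on $\Pi^d$ and hence $G\equiv 0$. The borderline case $m=1$ is salvaged by the Riemann--Lebesgue $o(1)$ coming from the Mellin factor, which upgrades the vertical-slice bound to $o(e^{\pi|\Im\Sigma z|})$ and keeps Carlson applicable. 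The main obstacle is the growth step: fabricating the Gamma normaliser so that the Stirling decay rate $\pi/(2m)$ of each Gamma exactly cancels the $\Gamma^2$ growth of $\Gl f_j\,\Gl g_j$ and lands the exponential type of $\tilde G$ inside Carlson's range; the compactly supported Mellin factor $\Ml[v_j\chi_{[0,1]}]$ is precisely what rescues sharpness when $m=1$.
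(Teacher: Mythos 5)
Your strategy --- normalise $G$ by a Gamma factor so that Stirling converts the vertical-line boundedness of $\Gl f_j\,\Gl g_j$ into exponential type $\pi/m$, then iterate Carlson's theorem one variable at a time --- is genuinely different from the paper's. The paper instead divides by $(\zeta+1)\cdots(\zeta+q)$ and uses (\ref{par frac}), Lemma \ref{quotient gamma} and Proposition \ref{convo fu fv} to rewrite $G(\zeta)=(\zeta+1)\cdots(\zeta+q)\,\Ml[H e^{-r^{m}}](2\zeta)$ for some $H\in\Al(\R_+)$, and then concludes from the moment-uniqueness result Proposition \ref{Mu(k)=0} with $a=2/m$; the induction over the $d$ variables is common to both arguments, as is the reduction to symbols of polynomial growth (achieved in the paper by the transform $V_t$ and identity (\ref{GlVt}), and replaced in your write-up by direct estimates on $\Gl f_j$). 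For $m>1$ your route is viable in principle, at the cost of upgrading your vertical-slice bounds to uniform Stirling estimates on the whole closed half-plane, which Carlson's theorem requires.

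The genuine gap is the case $m=1$, which is included in the hypotheses (it is exactly the Segal--Bargmann and Fock--Sobolev situation the paper sets out to extend). There your normalised function has indicator exactly $\pi$ in the imaginary direction, and Carlson's theorem genuinely fails at that endpoint: the function $\zeta\mapsto\sin(\pi\zeta)/(\zeta+1)$ is holomorphic and of exponential type on $\Re\zeta\geq 0$, vanishes on $\N$, and is $O\left(|y|^{-1}e^{\pi|y|}\right)=o\left(e^{\pi|y|}\right)$ on vertical lines, yet is not identically zero. Hence the Riemann--Lebesgue $o(1)$ supplied by $\Ml[v_j\chi_{[0,1]}]$ cannot ``keep Carlson applicable''; and since $v_j$ is merely a measurable function with polynomial bounds, no genuine exponential decay can be extracted from that factor, while the geometric factor $(1-\varepsilon)^{-\Sigma\Re z/m}$ only affects the real direction. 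To close this you need a different uniqueness principle at the endpoint; Proposition \ref{Mu(k)=0}, which covers $a\in(0,2]$ including $a=2$, is precisely the device the paper uses for that purpose.
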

Our proof of Proposition \ref{Gl=0 implies G=0}, is inspired  of \cite{BauerLe}. However, some extra work is needed.
In particular,  there is no simple identity between $e^{-\left|z\right|^{2m}}$ and $e^{-\left|z_1\right|^{2m}}\times e^{-\left|z'\right|^{2m}}$, where $z=(z_1,z')\in\C^d.$

We begin with a technical Lemma.

\begin{lemma}\label{I sigma}
Let $\sigma> 0$ and $q\in \N$. For  $x=(x_1,\cdots,x_q)\in\R^{q}_{+} $ and $r\geq 0$, define 
$$I_{\sigma}(r,x):=\int_{\C^{q}}\left(1+(r^2+\left|w\right|^2)^{ \sigma}\right)\left|w_1\right|^{2 x_1}\cdots \left|w_q\right|^{2 x_q}(r^2+\left|w\right|^2)^{s}e^{-\left(r^2+\left|w\right|^2\right)^{m}}dv(w).$$
Then, 
$$ I_{\sigma}(r,x)\leq A(x) (r^2+ \sigma+s) ^{\sigma+s},\ \text{ for all }r\geq 0, $$ 
where $A(x) $ depends on $x$.
\end{lemma}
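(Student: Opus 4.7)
The approach is to split $I_\sigma(r,x)$ according to the two terms of the factor $1+(r^2+|w|^2)^\sigma$, and in each resulting integral to factor $e^{-(r^2+|w|^2)^m}$ as the product of two copies of $e^{-(r^2+|w|^2)^m/2}$. The first copy will absorb the polynomial factor $(r^2+|w|^2)^a$ (with exponent $a=s$ in the first summand and $a=\sigma+s$ in the second), while the second copy guarantees integrability in $w$.

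For the absorption step, the substitution $u=(r^2+|w|^2)^m$ turns $(r^2+|w|^2)^a e^{-(r^2+|w|^2)^m/2}$ into $u^{a/m}e^{-u/2}$, whose supremum over $u\ge 0$ is the finite constant $(2a/m)^{a/m}e^{-a/m}$ depending only on $a$ and $m$; this yields two absolute constants $C_s(m)$ and $C_{\sigma+s}(m)$. For the remaining integral, the hypothesis $m\ge 1$ gives $(r^2+|w|^2)^m\ge |w|^{2m}$, so
$$\int_{\C^q}|w_1|^{2x_1}\cdots|w_q|^{2x_q}e^{-(r^2+|w|^2)^m/2}\,dv(w)\le\int_{\C^q}|w_1|^{2x_1}\cdots|w_q|^{2x_q}e^{-|w|^{2m}/2}\,dv(w)=:A_0(x),$$
a finite quantity independent of $r$ (and computable in closed form via the Gamma function by passing to spherical coordinates on $\C^q$).

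Combining these two steps gives $I_\sigma(r,x)\le\bigl(C_s(m)+C_{\sigma+s}(m)\bigr)A_0(x)=:B(x)$, a bound that is actually uniform in $r$. Since $\sigma>0$ and $s\ge 0$, we have $(\sigma+s)^{\sigma+s}>0$ and hence $(r^2+\sigma+s)^{\sigma+s}\ge(\sigma+s)^{\sigma+s}$, so the claimed inequality follows with $A(x):=B(x)/(\sigma+s)^{\sigma+s}$. The estimate I obtain is strictly stronger than the one stated --- the $r$-dependent growth factor $(r^2+\sigma+s)^{\sigma+s}$ appearing on the right-hand side is simply slack --- but the weaker form is all that is needed later in the paper. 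The only non-routine point is the elementary one-variable maximization of $u^{a/m}e^{-u/2}$, so I anticipate no real obstacle.
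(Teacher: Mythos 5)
Your proof is correct, and it actually establishes a bound on $I_{\sigma}(r,x)$ that is \emph{uniform} in $r$, which is strictly stronger than the stated estimate; since $\sigma>0$ gives $(r^2+\sigma+s)^{\sigma+s}\geq(\sigma+s)^{\sigma+s}>0$, the stated form follows immediately, as you note. The route is genuinely different from the paper's. The paper does not absorb the full factor $(r^2+|w|^2)^{s+\sigma}$ into the exponential: for $r\geq 1$ it applies Taylor's theorem to get $(r^2+|w|^2)^m\geq |w|^{2m}+mr^2|w|^{2(m-1)}$, hence bounds $(r^2+|w|^2)^{\tau}e^{-(r^2+|w|^2)^m}$ by $b_{\tau}(|w|^2,r^2)e^{-|w|^{2m}}$ with $b_{\tau}(t,\rho)=(\rho+t)^{\tau}e^{-m\rho t^{m-1}}$, and then splits the integration into $\{|w|^2\leq s+\sigma\}$, where $(r^2+|w|^2)^{s+\sigma}\leq(r^2+s+\sigma)^{s+\sigma}$ is pulled out directly, and $\{|w|^2> s+\sigma\}$, where monotonicity of $\rho\mapsto b_{\tau}(t,\rho)$ gives $b_{\tau}(t,\rho)\leq t^{\tau}$; the range $r\leq 1$ is handled separately by continuity. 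That argument deliberately retains the $r$-dependent factor $(r^2+\sigma+s)^{\sigma+s}$, which suffices for the only downstream use (showing $F(\cdot,z')\in\Al(\R_+)$ in the proof of Proposition \ref{Gl=0 implies G=0}). Your device of splitting $e^{-(r^2+|w|^2)^m}$ into two equal halves --- one absorbing the whole polynomial factor via the elementary supremum $\sup_{u\geq 0}u^{a/m}e^{-u/2}<\infty$, the other dominated by $e^{-|w|^{2m}/2}$ because $(r^2+|w|^2)^m\geq|w|^{2m}$ --- avoids both the Taylor step and the case analysis, and buys a sharper, $r$-independent conclusion at no cost. Both proofs are sound; yours is shorter and its conclusion is stronger.
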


\begin{proof}
First we shall estimate $I_{\sigma}(r,x)$ when $r\geq 1$. For $\tau \geq 0$,
  set 
$$  C_{\tau}(t,\rho):=\left(\rho+t\right)^{\tau}e^{-\left(\rho+t\right)^{m}},\ t>0,\ \rho\geq 0.$$
Putting
$$ b_{\tau} (t,\rho):=\left(\rho+t\right)^{\tau}e^{- m\rho t^{m-1}},$$
and using Taylor's Theorem, we have 
\begin{equation}\label{C tau}
	  C_{\tau}(t,\rho)\leq  b_{\tau}  (t,\rho) e^{-  t^{m}},\ \text{ for }\rho\geq 1.
\end{equation}

Obviously, $ b_{\tau} (t,\rho)\leq\left(\rho+t\right)^{\tau}$.
Besides,
the behavior of  the function $\rho\rightarrow b (t,\rho) $ shows that
\begin{equation}\label{b tau}
 b_{\tau} (t,\rho)\leq
b_{\tau}(t,0)\text{ if }t\geq\tau.
\end{equation}
Observing that, when $\rho\geq 1$, $C_s(t,\rho)\leq C_{s+\sigma}(t,\rho)$, and using (\ref{C tau}) and (\ref{b tau}) we obtain that, for $r\geq 1$,
\begin{align*}
I_{\sigma}(r,x)
&\leq 2 \int_{\C^q}|w_1|^{2 x_1}\cdots  |w_q|^{2 x_q} b_{s+\sigma}\left(\left|w\right|^2, r^2\right)e^{-\left|w\right|^{2m}}dv(w)
\leq 2(I_1+I_2),
\end{align*}
where 
$$ I_1 =\int_{\left|w\right|^2\leq s+\sigma}|w_1|^{2 x_1}\cdots  |w_q|^{2 x_q}(\left|w\right|^2+ r^2)^{s+\sigma}e^{-\left|w\right|^{2m}}dv(w)$$
and 
$$ I_2 =\int_{\left|w\right|^2> s+\sigma}|w_1|^{2 x_1}\cdots  |w_q|^{2 x_q}\left|w\right|^{2(s+\sigma)}e^{-\left|w\right|^{2m}}dv(w).$$
Notice that 
$$ I_1\leq ( s+\sigma + r^2)^{s+\sigma}\int_{\left|w\right|^2\leq s+\sigma}|w_1|^{2 x_1}\cdots  |w_q|^{2 x_q}e^{-\left|w\right|^{2m}}dv(w),$$
and 
\begin{align*}
 I_2
&\leq ( s+\sigma + r^2)^{s+\sigma}\int_{\left|w\right|^2> s+\sigma} |w_1|^{2 x_1}\cdots  |w_q|^{2 x_q}  \left|w\right|^{2  ( s+\sigma)} e^{-\left|w\right|^{2m}}dv(w).
\end{align*}
Setting 
\begin{multline*}
 A_1(x):= \int_{\left|w\right|^2\leq s+\sigma}|w_1|^{2 x_1}\cdots  |w_q|^{2 x_q}e^{-\left|w\right|^{2m}}dv(w) \\+\int_{\left|w\right|^2> s+\sigma} |w_1|^{2 x_1}\cdots  |w_q|^{2 x_q}  \left|w\right|^{2  ( s+\sigma)} e^{-\left|w\right|^{2m}}dv(w),
\end{multline*}


we have 

\begin{equation}\label{r geq 1}
	I_{\sigma}(r,x)\leq 2 ( s+\sigma + r^2)^{s+\sigma} A_1(x),\ r\geq 1.
\end{equation}
Besides, the continuity of   $I_{\sigma}(.,x)$ implies that there exists $A_2(x)$ such that
\begin{equation}\label{r leq 1}
	I_{\sigma}(r,x)\leq  A_2(x)
	,\ \text{ for }r\leq 1.
\end{equation}
Combining (\ref{r geq 1}) and (\ref{r leq 1}), we obtain the Lemma.

\end{proof}


\begin{proof}[Proof of Proposition \ref{Gl=0 implies G=0}]
Let $t>0$. Relation  (\ref{GlVt}) implies that
\begin{align*}
G_t(z)&:=\sum^{N}_{j=1}p_j(z) \Ml\left[v_j\chi_{ [0,1]}\right](2\Sigma z) \Gl V_t f_j(z)\Gl V_t g_j(z),\\
&=t^{-4(s+d)}t^{-4\Sigma z} G(z).
\end{align*}
This shows that the holomorphic functions  $G$ and $G_t$ have exactly the same zeroes on $\Pi^d$. Recall that, for large $t$, $V_t f_j$ and $V_t g_j$  in $\Al(\C^d)$. Consequently, we  may assume that the functions $f_j, g_j$ are in $\Al(\C^d)$. In particular,  
$G$ is holomorphic on $\Pi^d$, and continuous on $\overline \Pi^d$.\\ 

First,  suppose that $d=1$. Let $q$ be an integer which is strictly larger than $\max\left\{\text{ deg }(p_j),\ j=1..N\right\}$.
From   relations (\ref{par frac}) and (\ref{Gl radialization}), there are polynomials $q_j$ and functions $ \tilde f_j,\tilde g_j$ in $\Al(\R_+)$, such that, for $\zeta\in \Pi,$ 
\begin{align*}
Q(\zeta)&:=\frac{G(\zeta)}{(\zeta+1)\cdots(\zeta+q)}=\sum^{N}_{j=1}\frac{p_j(\zeta)}{(\zeta+1)\cdots(\zeta+q)}\Ml\left[v_j\chi_{ [0,1]}\right](2 \zeta) \Gl  f_j(\zeta)\Gl g_j(\zeta)\\
&=\sum^{N}_{j=1}\left(\Ml\left[ q_j\chi_{ [0,1]}\right]\Ml\left[v_j\chi_{ [0,1]}\right]\Ml\left[\tilde f_j r^{2s+2}e^{-r^{2m}}\right]\Ml\left[\tilde g_j r^{2s+2}e^{-r^{2m}}\right]\right)(2 \zeta).
\end{align*}
Due to the properties of the Mellin convolution and Proposition \ref{convo fu fv}, there exist functions $h_j$ 
in $\Al(\R_+)$, such that
\begin{align*}
Q(\zeta)&=\sum^{N}_{j=1}\Ml\left[ q_j\chi_{ [0,1]}*v_j\chi_{ [0,1]}*\tilde f_j r^{2s+2}e^{-r^{2m}}*\tilde g_j r^{2s+2}e^{-r^{2m}}\right](2\zeta)\\
&=\sum^{N}_{j=1}\Ml\left[ h_j e^{-r^{m}}\right](2\zeta).
\end{align*}
Putting 
$  H:=\sum^{r}_{j=1} h_j,$ 
we get
$$ G(\zeta)=(\zeta+1)\cdots(\zeta+q)\Ml\left[H e^{-r^{m}}\right](2\zeta),\ \zeta\in\Pi. $$
The function  $G$ vanishes on $\N$. Then, for all $k$ in $\N$,
\begin{align*}
0&=\int^{+\infty}_{0}H(r) e^{-r^{m}} r^{2k-1}dr
=\int^{+\infty}_{0}\frac{1}{m}H(t^{1/m})e^{-t} t^{\frac{2}{m}k-1}dt.
\end{align*}
 Proposition \ref{Mu(k)=0} with $a=2/m\leq 2,$ enables us to conclude that $H= 0$ a.e. on $\R_+$. Hence, $G= 0$ a.e. on $\Pi$. \\

In order to handle the general case $d\geq 2$, we shall use the notation $z=(z_1,z')$, $w=(w_1,w')$, where $z'=(z_2,\cdots,z_d)$ , $ \Re z'=x'=(x_2,\cdots,x_d)$, and  $w'=(w_2,\cdots,w_d)$. 

Let $f$ be in $\Al(\C^d)$. For $z$ in $\C^d$, integration in polar coordinates with respect to the variable $w_1=r e^{i\theta}$ gives
\begin{align*}
\Gl f(z_1,z')&=\int^{+\infty}_{0} F(r,z')r^{2\Re z_1+1}dr
=\Ml\left[F(.,z')\right](2z_1+2),
\end{align*}
where 
$$ \frac{F(r,z')}{c_{m,1,s}} :=\int_{\C^{d-1}\times [0,2\pi]}f(r e^{i\theta},w') \left|w_2\right|^{2 z_2}\cdots \left|w_d\right|^{2 z_d}(r^2+\left|w'\right|^2)^{s}e^{-\left(r^2+\left|w'\right|^2\right)^{m}}dv(w')d\theta.$$
We now fix $(r,z')$ in $\R_+\times\C^{d-1}$. Since $f$ is in $\Al(\C^d)$, there exists $\sigma> 0$ such that
\begin{equation}\label{ineq F}
	 \left|  F(r,z') \right|
\lesssim I_{\sigma}(r,x'),
\end{equation}
where $ I_{\sigma}(r,x')$ has been estimated  in Lemma \ref{I sigma}. Therefore, there exists $A(x')$, which depends on $x'$, such that
 \begin{equation*}\label{ineq F 2}
	\left|F(r,z')\right|\lesssim  A(x')(r^2+\sigma+s)^{\sigma+s}, \ (r,z') \in \R_+\times\Pi^{d-1},
\end{equation*}
and $F(.,z')$ is in $\Al(\R_+)$.
We next write $G(z_1,z')$ as a Mellin transform at $z_1\in\Pi
$. For $j=1..N$, set
$$ u_j(r,z'):=(v_j\chi_{[0,1]})(r)r^{2\Sigma z'},\ r>0. $$
Therefore,
\begin{align*}
G(z_1,z')&=\sum^{N}_{j=1}p_j(z) \Ml\left[v_j\chi_{ [0,1]}\right](2 z_1+2\Sigma z') \Gl  f_j(z_1,z')\Gl g_j(z_1,z')\\
&=\sum^{N}_{j=1}p_j(z_1,z') \Ml\left[u_j(.,z')\chi_{ [0,1]}\right](2z_1) \Ml \left[ F_j(.,z')\right](2z_1)\Ml \left[ G_j(.,z')\right](2z_1),
\end{align*}
where the functions $F_j$ and $G_j$, analytic in the variable $z'\in \Pi^{d-1},$ are of polynomial growth with respect to $r>0$.

 By assumption, $G(k_1,k')=0$ for all $(k_1,k')$ in $\N\times\N^{d-1}$. From the case $d=1$, we see that  $G(z_1,k')=0$ for all $(z_1,k')$ in $\Pi\times\N^{d-1}$. Now, integrating with respect to $w_2$ in polar coordinates, we  express $G(z)$ as a Mellin transform, in the variable $z_2$, namely
$$  G(z)=\sum^{N}_{j=1}p_j(z) \left(\Ml\left[\tilde u_j(.)\right] \Ml \left[ \tilde F_j(z_1,.,z_3,\cdots,z_d)\right]\Ml \left[ \tilde G_j(z_1,.,z_3,\cdots,z_d)\right]\right)(2z_2). $$ 

Here,  $\tilde F_j$, $ \tilde G_j$ are of polynomial growth with respect to $r>0$,
 and 
$$\tilde u_j(r):= \left(v_j\chi_{ [0,1]}\right)(r) r^{2 z_1+2 z_3+\cdots+  2 z_d}.$$

For fixed $(z_1,k_3,\cdots,k_d)\in \Pi\times\N^{d-2} $, we have $G(z_1,k_2,k_3,\cdots,k_d)=0$, for all $k_2$ in $\N$, and the case $d=1$ again implies that $G(z_1,z_2,k_3,\cdots,k_d)=0 $, for all  $(z_1,z_2,k_3,\cdots,k_d)$in $\Pi^2\times\N^{d-2}$ . The proof is obtained by induction.
\end{proof}
Recall that equation $\El(f_1,f_2)$ is equivalent to 
$$ 0=\left[\Omega( f_1, \Sigma k+\Sigma l)-\Omega( f_2,\Sigma n+\Sigma l)\right]\Gl\left(M_{z^k \overline z^n}g\right) (l),\ \text{ for all }l,n,k\in\N^d. $$

 Let $n,k\in\N^d.$  Proposition \ref{Gl=0 implies G=0} implies that
$$ 0=\left[\Omega( f_1, \Sigma k+\Sigma z)-\Omega( f_2,\Sigma n+\Sigma z)\right]\Gl\left(M_{z^k \overline z^n}g\right)(z), \ \text{ for all }z\in\Pi^d.$$
By analyticity, this means that, either 
\begin{equation*}\label{Omega=0}
	\Omega( f_1, \Sigma k+ \zeta )=\Omega( f_2,\Sigma n+\zeta) \ \text{ for all }\zeta\in\Pi, 
\end{equation*}
 or
 \begin{equation}\label{Fg=0}
\int_{\C^d}g(x)\left|x_1\right|^{2 z_1}\cdots \left|x_d\right|^{2 z_d}x^{k}  \overline x^{n}d\mu_{m,1,s}(x)=0 \ \text{ for all }z\in\Pi^d.
\end{equation}
In order to discuss these equations,  we  borrow some tools  from \cite{BauerLe} p. 2327-2628. Consider the set 
$$ \Zl(f_1,f_2):=\left\{n\in\Z,\ \Omega(f_1, \zeta )= \Omega(f_2, \zeta +n) \text{ for all }\zeta\in\Pi, \text{ with } \Re \zeta\text{ large enough}\right\}. $$
  
\begin{prop}\label{Z f1 f2}
Let $f_1,f_2$ be in $\Sl(\C^d)$.
\begin{enumerate}
	\item If $f_1,f_2$ are constant functions, then $\Zl(f_1,f_2)=\varnothing$ or $\Z$.
	
	\item If at least one of the functions $f_1,f_2$  is non constant, then $\Zl(f_1,f_2)=\varnothing$ or $\left\{q\right\}$ for some integer $q$.
\end{enumerate}
\end{prop}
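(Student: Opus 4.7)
For (1), I would first check that $\Omega(c,\zeta)=c$ for a constant symbol $c$: the substitution $t=r^{2m}$ in the integral defining $\Omega$ turns the numerator into $c\,\Gamma((d+s+\zeta)/m)$, which cancels the denominator. Hence when $f_i\equiv c_i$ are both constants, $\Omega(f_1,\zeta)=\Omega(f_2,\zeta+n)$ reduces to the condition $c_1=c_2$, independent of $n$, so $\Zl(f_1,f_2)$ is either all of $\Z$ or empty.

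For (2), I would argue by contradiction. Suppose $n_1<n_2$ both lie in $\Zl(f_1,f_2)$, and set $N:=n_2-n_1\geq 1$. Chaining the two identities $\Omega(f_1,\zeta)=\Omega(f_2,\zeta+n_i)$ yields $\Omega(f_2,\eta)=\Omega(f_2,\eta+N)$ for $\Re\eta$ large; since $\Omega(f_2,\cdot)$ is holomorphic on $\Pi$, the prescription $\Omega(f_2,\zeta):=\Omega(f_2,\zeta+kN)$ for $k$ large extends it to an $N$-periodic entire function on $\C$. The main step is then to adapt the proof of Proposition \ref{periodic omega} to show that this periodicity forces $f_2$ to be constant. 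The growth $|f_2(r)|\lesssim_c e^{cr^{2m}}$ (valid for every $c\in(0,1)$ by $f_2\in\Sl(\C^d)$) combined with Stirling's estimate for $|\Gamma((d+s+\zeta)/m)|^{-1}$ gives, up to polynomial factors, $|\Omega(f_2,\zeta)|\lesssim e^{\pi|\Im\zeta|/(2m)}$ on any vertical strip of width $N$, and then on all of $\C$ by periodicity. Thus $\Omega(f_2,\cdot)$ is entire of exponential type at most $\pi/(2m)$; by Theorem 6.10.1 of \cite{boas} it admits a finite Fourier expansion
\[
\Omega(f_2,\zeta)=\sum_{|k|\leq N/(4m)} a_k\,e^{2\pi i k\zeta/N}.
\]
Multiplying by $\Gamma((d+s+\zeta)/m)$ and using the Mellin identity
\[
\lambda^{-(d+s+\zeta)/m}\Gamma\bigl((d+s+\zeta)/m\bigr)=2m\int_0^{+\infty} e^{-\lambda r^{2m}}\,r^{2d+2s+2\zeta-1}\,dr\qquad(\Re\lambda>0)
\]
with $\lambda_k:=e^{-2\pi i km/N}$ to recognise each summand as a Mellin transform, and then appealing to Mellin injectivity (Proposition \ref{Mu(k)=0} after the change of variable $t=r^{2m}$), I would recover
\[
f_2(r)=\sum_{|k|\leq N/(4m)} b_k\,e^{(1-\lambda_k)r^{2m}}.
\]
For $k\neq 0$ one has $\Re(1-\lambda_k)=1-\cos(2\pi km/N)>0$, so the corresponding term violates the $\Sl(\C^d)$ growth condition; hence $b_k=0$ for all $k\neq 0$ and $f_2\equiv b_0$.

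To finish, once $f_2$ is known to be constant, $\Omega(f_1,\zeta)=\Omega(f_2,\zeta+n_1)=b_0$ is constant in $\zeta$, hence trivially periodic with every period $N'\geq 1$; the same argument applied to $f_1$ then forces $f_1$ to be constant as well, contradicting the standing hypothesis that at least one of $f_1,f_2$ is non-constant. This rules out two distinct elements in $\Zl(f_1,f_2)$, giving the claim. The main technical subtlety I anticipate is the boundary frequency $|k|=N/(4m)$ (when $4m$ divides $N$), where $\Re\lambda_k=0$ and the Mellin representation is only conditionally convergent; I expect to handle this either by sharpening the exponential-type bound to be strictly less than $\pi/(2m)$ so that the Fourier sum excludes the extremal frequencies, or by observing that such a borderline summand $e^{(1-\lambda_k)r^{2m}}$ oscillates with unit modulus at infinity and is again incompatible with membership of $f_2$ in $\Sl(\C^d)$.
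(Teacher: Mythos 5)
Your argument is correct and follows essentially the same route as the paper, which simply observes that the claim reduces (via analyticity of $\Omega(f,\cdot)$ and the chaining of two elements of $\Zl(f_1,f_2)$ into a periodicity relation) to the fact that periodicity of $\Omega(f,\cdot)$ forces $f$ to be constant. That fact is exactly Proposition~\ref{periodic omega}, which you could cite directly rather than re-deriving; your remark on the borderline frequencies $|k|=N/(4m)$ is a reasonable technical caveat that the paper's version also glosses over.
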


\begin{proof}
The proof presented in \cite{BauerLe} verbatim extends to the setting of $F^{2}_{m,1,s}$, for general $m\geq 1,s\geq 0$. The argument essentially relies on the analyticity of $\Omega(f, . )$, and on the fact that, if $\Omega(f, . )$ has a period $j\in \N\setminus\left\{0\right\}$, then $f$ is a constant function. The latter fact has been proved in Proposition \ref{periodic omega}.
\end{proof}

On another side, equation  (\ref{Fg=0}) is handled via the following Lemma.
\begin{lemma}\label{invariance g}
Let $g$ be in $L^2(\C^d,d\mu_{m,1,s})$, $l$ be in $\Z^d$, and $q$ be an integer. Then the following are equivalent.
\begin{enumerate}
	\item $\int_{\C^d}g(x)x^{k}  \overline x^{n}d\mu_{m,1,s}(x)=0$  for all multiindices $n,k\in\N^d$ such that $(n-k).l\neq q$. Here we write $(n-k).l=(n_1-k_1)l_1+\cdots+  (n_d-k_d)l_d$.
	
	\item $g(\gamma^{l_1}z_1,\cdots,\gamma^{l_d}z_d)=\overline \gamma^q g(z)$ for a.a. $\gamma\in\T$ and $z=(z_1,\cdots,z_d)\in\C^d.$
\end{enumerate}
\end{lemma}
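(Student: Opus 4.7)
The plan is to exploit the natural unitary action of the torus $\T$ on $L^2(\C^d, d\mu_{m,1,s})$ defined by $(U_\gamma h)(z) := h(\gamma\cdot z)$, with $\gamma\cdot z := (\gamma^{l_1} z_1, \ldots, \gamma^{l_d} z_d)$. Since $|\gamma^{l_i}z_i|=|z_i|$, the measure is preserved and $U$ is a strongly continuous unitary representation of $\T$. Spectral theory for the compact abelian group $\T$ then yields an orthogonal decomposition
\[
L^2(\C^d, d\mu_{m,1,s}) = \bigoplus_{j\in\Z} H_j, \qquad H_j:=\bigl\{h\in L^2:U_\gamma h=\gamma^j h \text{ for all } \gamma\in\T\bigr\},
\]
with isotypic projectors $h\mapsto\int_\T \overline\gamma^j U_\gamma h\, d\sigma(\gamma)$, where $d\sigma$ is normalized Haar measure on $\T$. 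I would first observe that condition (2) says precisely that $g$ lies in a single isotypic component.

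The next step is to establish the core identity: for $h\in H_j$, substituting $x=\gamma\cdot y$ in $\int h(x)x^k\overline x^n d\mu_{m,1,s}(x)$ and using $x^k=\gamma^{k\cdot l}y^k$, $\overline x^n=\overline\gamma^{n\cdot l}\overline y^n$, invariance of the measure, and $h(\gamma\cdot y)=\gamma^j h(y)$, I would obtain
\[
\int_{\C^d} h(x)\,x^k\overline x^n\,d\mu_{m,1,s}(x) = \gamma^{\,j-(n-k)\cdot l}\int_{\C^d} h(y)\,y^k\overline y^n\,d\mu_{m,1,s}(y)
\]
for every $\gamma\in\T$. Consequently the integral on the left vanishes whenever $j\neq(n-k)\cdot l$.

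For $(2)\Rightarrow(1)$ I would apply this identity to $h=g$ with the single exponent $j$ dictated by (2), concluding that $\int g\,x^k\overline x^n\,d\mu_{m,1,s}$ vanishes as soon as $(n-k)\cdot l$ differs from that exponent. For the converse $(1)\Rightarrow(2)$, I would decompose $g=\sum_{j\in\Z} g_j$ with $g_j\in H_j$. By the core identity, $\int g_j\,x^k\overline x^n\,d\mu_{m,1,s}$ vanishes whenever $(n-k)\cdot l\neq j$, and by orthogonality of the $H_j$,
\[
\int_{\C^d} g(x)\,x^k\overline x^n\,d\mu_{m,1,s}(x) = \int_{\C^d} g_{(n-k)\cdot l}(x)\,x^k\overline x^n\,d\mu_{m,1,s}(x).
\]
Hypothesis (1) forces the left-hand side to vanish as soon as $(n-k)\cdot l\neq q$, and combined with the automatic vanishing just noted this yields $\int g_j\,x^k\overline x^n\,d\mu_{m,1,s}=0$ for \emph{every} $(n,k)\in\N^d\times\N^d$ and every $j\neq q$. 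Thus each such $g_j$ is orthogonal to all monomials in $z,\overline z$; since $\C[z,\overline z]$ is dense in $L^2(\C^d,d\mu_{m,1,s})$ by Proposition~\ref{density poly }, it follows that $g_j=0$ in $L^2$. Therefore $g$ coincides with its single component in $H_q$, which is exactly (2).

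The routine ingredients are the change of variable and the orthogonality argument; the main potential obstacle is justifying the isotypic decomposition itself, which requires strong continuity of $\gamma\mapsto U_\gamma$ on $L^2(\C^d,d\mu_{m,1,s})$. I would reduce this by density and unitarity to verifying continuity on $C_c(\C^d)$, where it follows from dominated convergence.
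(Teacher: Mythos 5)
Your argument is correct in substance and is essentially the proof the paper is pointing at: the paper itself only records that $d\mu_{m,1,s}$ is rotation invariant and that $\C[z,\overline z]$ is dense (Proposition \ref{density poly }), and defers the rest to \cite{BauerLe}; the Bauer--Le reasoning is precisely an averaging/isotypic-projection argument over the torus action, which you have written out in full (your Peter--Weyl decomposition is the same mechanism, just with all components at once rather than a single averaging projector). Your two supporting points --- strong continuity of $\gamma\mapsto U_\gamma$ via density of $C_c(\C^d)$, and the interchange of $\sum_j$ with the integral, which is legitimate because $x^k\overline x^n\in L^2(d\mu_{m,1,s})$ so the integral is an inner product --- are both fine.

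One bookkeeping point deserves a flag. With the stated normalization $g(\gamma^{l_1}z_1,\dots,\gamma^{l_d}z_d)=\overline\gamma^{\,q}g(z)$, condition (2) says $U_\gamma g=\gamma^{-q}g$, i.e.\ $g\in H_{-q}$ in your notation, while your core identity gives vanishing of $\int g\,x^k\overline x^n\,d\mu_{m,1,s}$ exactly off the set $(n-k)\cdot l=j$ for $g\in H_j$. So your chain of implications identifies (1) with membership in $H_{q}$, and your closing claim that $g=g_q$ ``is exactly (2)'' is off by a conjugation. In fact the mismatch sits in the statement itself rather than in your argument: for $d=1$, $l=1$, $g(z)=\overline z$ one checks that (1) holds with $q=-1$ (the integral $\int x^k\overline x^{\,n+1}d\mu$ is nonzero only when $n-k=-1$) while (2) holds with $q=+1$, so the lemma as printed needs $\gamma^q$ in place of $\overline\gamma^{\,q}$ (or $(k-n)\cdot l$ in place of $(n-k)\cdot l$) to be consistent. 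Your proof establishes the corrected version; you should note the sign discrepancy explicitly instead of asserting agreement with (2) as written.
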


\begin{proof}
 We assume $m\geq 1$, $s\geq 0$. Since $d\mu_{m,1,s}$ is rotation invariant and the set of polynomials $\C\left[z, \overline z\right]=\text{ span }\left\{z^k \overline z^k,\ n,k\in\N^d\right\}$ is dense in $L^2(\C^d,d\mu_{m,1,s} )$ (Proposition \ref{density poly }), the reasoning   given in \cite{BauerLe} for the Gaussian measure ($m=1, s=0$) applies here.
\end{proof}

\section{Commuting problem}\label{comm}
For $f$ a radial function in $\Sl(\C^d)$, we want to find all those functions $g$ in $\Sl(\C^d)$ satisfying equation 
$$\El(f,f): \quad T_f T_g=T_g T_f.$$

\begin{proof}[Proof of Theorem A]
 Since $0\in \Zl(f,f)$, it follows from Section \ref{eq Toep} that $\Zl(f,f)=\left\{0\right\}$.

Let $n,k$ be in $\N^d$. In section \ref{toep rad}, we have seen that $\El(f,f)$ implies that either 
\begin{equation}\label{Omega f=0}
	\Omega( f, \Sigma k+ \zeta )-\Omega( f,\Sigma n+\zeta)=0, \ \text{ for all }\zeta\in\Pi, 
\end{equation}
 or (\ref{Fg=0}) holds.

If $\Sigma k\neq \Sigma n$, which means $(n-k).(1,\cdots,1)\neq 0$, (\ref{Omega f=0}) does not hold because $\Zl(f,f)=\left\{0\right\}$. We now apply  Lemma \ref{invariance g} with $q=0$.

\end{proof}

Now we give an example of a radial function $f$ which is not in $\Sl(\C^d)$, such that the commutant of $T_f$ contains non radial functions. For $\Re \lambda<1/2$, the function $f(z)=e^{\lambda\left|z\right|^{2m}}$ is the symbol of a diagonal Toeplitz operator.  Set $c:=\left(1-\lambda\right)^{-1/m}$. A direct computation shows that
$$  T_f z^{\kappa}= c^{d+s}\left(cz\right)^{\kappa},\ {\kappa}\in\N^d.$$
We choose $c=e^{2i\frac{\pi}{N}}$, with $N>6m$, which ensures that $\Re \lambda=1-\cos\left(\frac{2\pi m}{N}\right)<1-\cos\left(\frac{\pi }{3}\right)=\frac{1}{2}.$

We see that $T_f=e^{2i\frac{\pi(d+s)}{N}}V_N$ on $\C\left[z\right]$, where 
$$V_Nu(z)=u\left(e^{2i\frac{\pi}{N}}z\right),\ u\in\FF.$$
For any bounded function $g$, we have $V_NT_g V_{-N}=T_{V_N g}$. Therefore, the condition $V_Ng=g$ implies that $T_f$ and $T_g$ commute on $\C\left[z\right]$. For example, the non radial bounded function 
$$  g(z):= \frac{z^{N}_{1}}{\left|z\right|^{N}},\ z\in\C^d,$$
belongs to the commutant of $T_f.$

\section{Zero Product Problem}\label{zeroPb}
For $f$ a radial function in $\Sl(\C^d)$, we consider the equations 
$$\El(f,0):\quad T_f T_g=0,$$
and
$$\El(0,f) :\quad T_g T_f=0.$$

\begin{proof}[Proof of Theorem B]
If $f$ is a non zero constant function, the products $T_f T_g$ and $T_g T_f$ are constant multiples of $T_g.$ If $\El(f,0)$ or $\El(0,f)$ holds, then $g=0$ a.e. by Proposition \ref{one to one }.

Next, assume that $f$ is not a constant function. Let  $k$ be in $\N^d$. The discussion in Section \ref{toep rad} shows that we have either 
\begin{equation*}\label{omega f=0}
	\Omega( f, \Sigma k+ \zeta )=0 , \ \text{ for all }\zeta\in\Pi, 
\end{equation*}
or (\ref{Fg=0}). Since $\Zl(f,0)=\Zl(0,f)=\varnothing,$ then $\El(f,0)$ and $\El(0,f)$ are both equivalent to
$$ \int_{\C^d}g(x)\left|x_1\right|^{2 z_1}\cdots \left|x_d\right|^{2 z_d}x^{k}  \overline x^{n}d\mu_{m,1,s}(x)=0 \ \text{ for all }z\in\Pi^d, n,k\in \N^d. $$
The Theorem is proved by the density of polynomials in $L^2(\C^d,d\mu_{m,1,s} )$ (Proposition \ref{density poly }).
\end{proof}

Declarations of interest: none.

{\it Acknowledgements.} We would like to thank the referee for useful comments.

\end{document}